\theoremstyle{plain}
\newtheorem{theorem}{Theorem}[section]
\newtheorem{lemma}[theorem]{Lemma}
\newtheorem{proposition}[theorem]{Theorem}
\newtheorem{corollary}[theorem]{Corollary}
\theoremstyle{definition}
\newtheorem{defi}{Definition}[section]
\newtheorem{example}{Examples}[section]
\newcommand{\eg}{e.g.}
\newcommand{\arsubset}{\ar@{}[r]|-*[@]{\subset}}
\newcommand{\arsupset}{\ar@{}[r]|-*[@]{\supset}}
\newcommand{\arapprox}{\ar@{}[r]|-*[@]{\approx}}
\newcommand{\innerproduct}[2]{\left\langle{#1},{#2}\right\rangle}
\newcommand{\norm}[1]{\|{#1}\|}
\newcommand{\cod}{\mathrm{cod\,}}
\newcommand{\Aut}[1]{\mathrm{Aut}(#1)}
\title{Infinite-Dimensional Generalizations of Orthogonal Groups over Hilbert~Spaces : Constructions~and~Properties}
\author{
Jianwen Luo(\small{E-mail:719418643@qq.com})\\
\small{(High School Affiliated to Shanghai Jiaotong University , Jiading Campus)}\\
Tutor:Yunhe Chen
}
\begin{document}
\begin{titlepage}
\maketitle
\begin{abstract}
In real Hilbert spaces, this paper generalizes the orthogonal groups $\mathrm{O}(n)$ in two ways. One way is by finite multiplications of a family of operators from reflections which results in a group denoted as $\Theta(\kappa)$, the other is by considering the automorphism group of the Hilbert space denoted as $O(\kappa)$. We also try to research the algebraic relationship between the two generalizations and their relationship to the stable~orthogonal~group~$\mathrm{O}=\varinjlim\mathrm{O}(n)$ in terms of topology. In this paper we mainly show that : (a) $\Theta(\kappa)$ is a topological and normal subgroup of $O(\kappa)$; (b) $O^{(n)}(\kappa) \to O^{(n+1)}(\kappa) \stackrel{\pi}{\to} S^{\kappa}$ is a fibre bundle where $O^{(n)}(\kappa)$ is a subgroup of $O(\kappa)$ and $S^{\kappa}$ is a generalized sphere. \\
\\{\bf Keywords:} Reflection, Orthogonal Group, Fibre Bundle, Quotient Space, Sphere
\end{abstract}
\end{titlepage}
\section{Introduction}
Orthogonal groups are very important objects for the linear algebra and Euclidean geometry. In common context we have already defined orthogonal groups of any finite dimension, i.e. $\mathrm{O}(n)$ has been defined for all $n \in \mathbb{N}$. On one hand, $\mathrm{O}(n)$ can be seen as the automorphism group of $\mathbb{R}^{n}$, i.e. $\mathrm{O}(n)=\Aut{\mathbb{R}^{n}}=\{\sigma \in\mathbf{GL}(n) \innerproduct{\sigma\bm{x}}{\sigma\bm{y}}_{\mathbb{R}^{n}}=\innerproduct{\bm{x}}{\bm{y}}_{\mathbb{R}^{n}} \forall \bm{x},\bm{y}\in \mathbb{R}^{n} \}$. On the other hand, $\mathrm{O}(n)$ can be generated by all those reflections which are linear transformations over $\mathbb{R}^{n}$. Both of the statements give us two ways to generalize orthogonal groups to cardinal-dimensional cases: denote the proper class which consists of `all' the cardinal numbers as $\mathbf{Card}$, given a real Hilbert space $\mathcal{H}$ with its dimension $\kappa \in \mathbf{Card}$, one way is to study the automorphism group $\Aut{\mathcal{H}}$, the other is to generate a group with all the reflections which hold the zero vector $\bm{0} \in \mathcal{H}$. For technical reasons we also generalize the conception of `reflections' when practising the second way. Actually we'll get two seem-not-to-be-equivalent versions of `cardinal-dimensional orthogonal groups'. We'll also research some of the groups' algebraic and topological properties.\\\\
This paper will follow these steps:
\begin{enumerate}\addtolength{\itemsep}{-1.5ex}
\item Derive the conception of `reflections' to `reflection operators' by unifying and generalizing a certain family of involutory operators.
\item Given a real Hilbert space $\mathcal{H}$ whose dimension is $\kappa \in \mathbf{Card}$, construct a group namely $\Theta(\kappa)$ with the `reflection operators' as the first generalization of the orthogonal groups.
\item Given a real Hilbert space $\mathcal{H}$ whose dimension is $\kappa \in \mathbf{Card}$, treat its automorphism group $\Aut{\mathcal{H}}$ as the second generalization of the orthogonal groups and denote it as $O(\kappa)$ with a research on the properness of the definition.
\item Discuss the relationship between $\{\Theta(\kappa)\}_{\kappa \in \mathbf{Card}}$ and $\{O(\kappa)\}_{\kappa \in \mathbf{Card}}$.
\item Give out some interesting groups constructed from either $\Theta(\kappa)$ or $O(\kappa)$.
\item Discuss the algebraic properties of $\Theta(\kappa)$ and $O(\kappa)$.
\item Discuss the topological properties of $\Theta(\kappa)$ and $O(\kappa)$, mainly in terms of group actions and fibres.
\end{enumerate}
In the view of sections, the structure of this paper is showed below:\\
\includegraphics[scale=0.55]{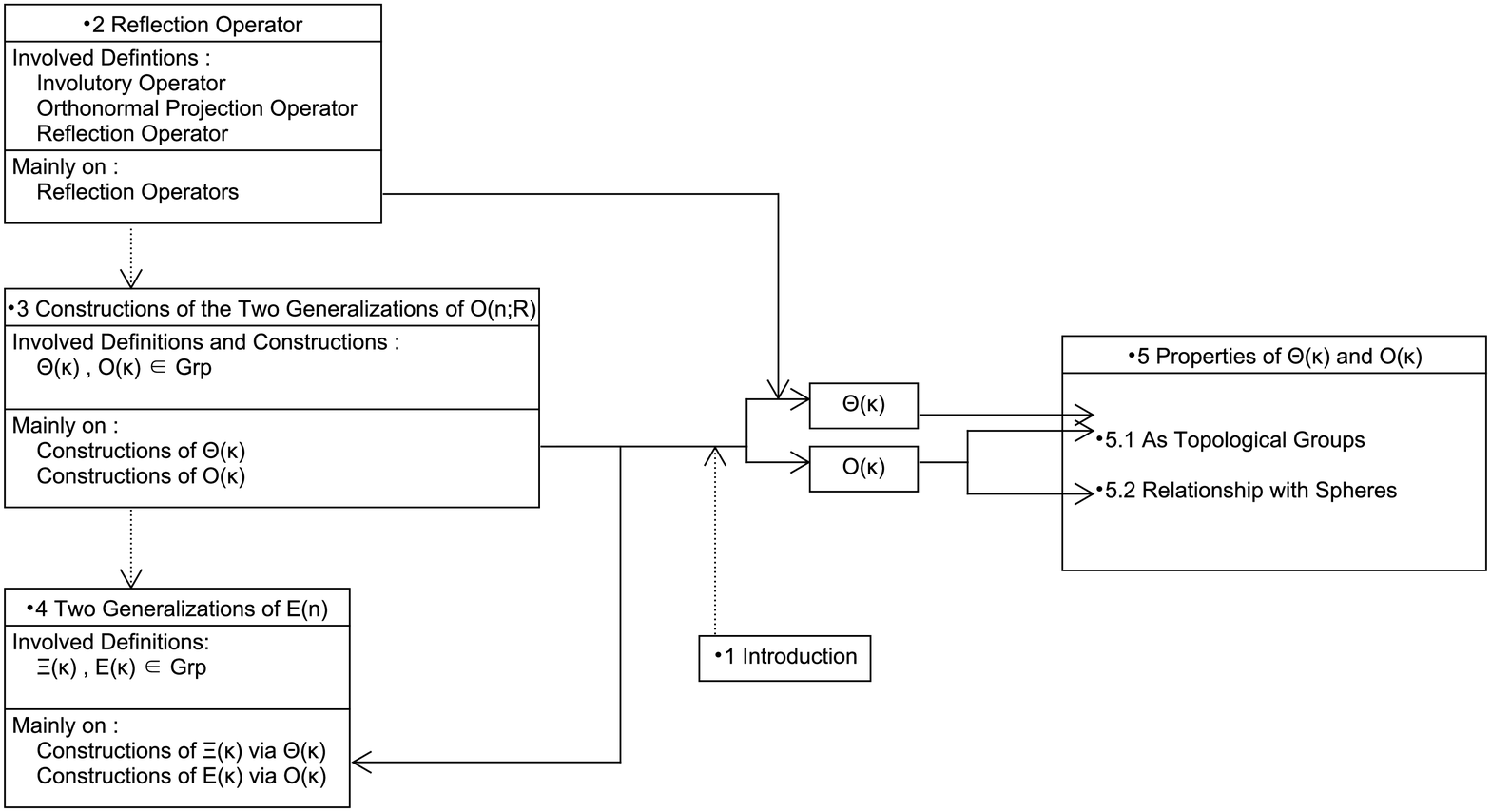}
\newpage
\section{Reflection Operators}
To give out the `bricks' used in the construction of the generalizations, firstly we'll export the definition of `reflection operators'.\\\\
Let's start from several definitions and notations of some special kinds of operators.\\
Followings are the definitions of some standard concepts :
\begin{defi}(\textbf{bounded linear operator and operator norm $\norm{\bullet}_{\text{op}}$})
Given a Hilbert space $\mathcal{H}$ and a linear operator $A$ over $\mathcal{H}$. $A$ is called \textbf{a bounded linear operator} if and only if there exists a constant number $c>0$ such that for every $\bm{v}\in\mathcal{H}$
\[
\norm{A\bm{v}}_{\mathcal{H}} \leq c\norm{\bm{v}}
\]
The smallest such $c$ is called \textbf{the operator norm of $A$}, denoted as $\norm{A}_{\text{op}}$ (or simply $\norm{A}$ if not to be confused). All the bounded linear operators over $\mathcal{H}$ is denoted as $\mathscr{B}(\mathcal{H})$.
\end{defi}
\begin{defi}(\textbf{unitary operator})\\
For a Hilbert space $\mathcal{H}$, a \textbf{unitary operator} $A$ is such an operator whose inverse $A^{-1}$ is exactly its Hilbert conjugate operator $A^{*}$. In other words, the operator $A$ is orthogonal if and only if $A^{*}=A^{-1}$.
\end{defi}
Followings are some definitions of the concepts specified in this paper :
\begin{defi}(\textbf{orthogonal operator})\\
For a real Hilbert space $\mathcal{H}$, an \textbf{orthogonal operator} $A$ is such an operator whose inverse $A^{-1}$ is exactly its Banach conjugate operator $A^{T}$ (and thus its Hilbert conjugate $A^{*}$ since $\mathcal{H}$ is a real Hilbert space). In other words, the operator $A$ is orthogonal if and only if $A^{T}=A^{-1}$.
\end{defi}
\begin{defi}(\textbf{involutory operator})\\
An operator $A$ is \textbf{involutory} if and only if $A^{-1}=A$ , i.e. $A^{2}=I$ .
\end{defi}
\begin{defi}(\textbf{orthogonal projection operator})\\
Let $\mathcal{H}$ be a real Hilbert space and $\mathscr{B}(\mathcal{H})$ the set consists of all of the bounded linear operators over $\mathcal{H}$. An operator $P \in \mathscr{B}(\mathcal{H})$ is an \textbf{orthogonal projection operator} if and only if $P^{2}=P^{T}=P$.
\end{defi}
\begin{proposition}
Given a real Hilbert space $\mathcal{H}$. Given a linear subspace $\Gamma \subseteq \mathcal{H}$ with an orthonormal set as its basis $\{\bm{v}_{\gamma}\}_{\gamma \in \mathfrak{Y}}$. For any vector $\bm{x} \in \mathcal{H}$, the sum $\sum_{\gamma\in\mathfrak{Y}}\innerproduct{\bm{x}}{\bm{v}_{\gamma}} \bm{v}_{\gamma}$ is well defined, i.e. converges to a certain vector in $\mathcal{H}$.
\end{proposition}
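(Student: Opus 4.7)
The plan is to reduce the possibly uncountable sum to a countable one by exploiting Bessel's inequality, then to invoke completeness of $\mathcal{H}$. Recall that for any \emph{finite} subset $F \subseteq \mathfrak{Y}$ we have $\sum_{\gamma\in F}|\innerproduct{\bm{x}}{\bm{v}_{\gamma}}|^{2} \leq \norm{\bm{x}}^{2}$, which follows directly from expanding $\norm{\bm{x}-\sum_{\gamma\in F}\innerproduct{\bm{x}}{\bm{v}_{\gamma}}\bm{v}_{\gamma}}^{2}\geq 0$ together with orthonormality of the $\bm{v}_{\gamma}$. Taking the supremum over all finite $F$, this gives an absolute bound on the net of partial sums of $|\innerproduct{\bm{x}}{\bm{v}_{\gamma}}|^{2}$.

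First I would deduce that the support $\mathfrak{Y}_{\bm{x}}:=\{\gamma\in\mathfrak{Y}:\innerproduct{\bm{x}}{\bm{v}_{\gamma}}\neq 0\}$ is at most countable. For each $n\in\mathbb{N}$ the set $\{\gamma: |\innerproduct{\bm{x}}{\bm{v}_{\gamma}}|>1/n\}$ must be finite (otherwise Bessel's bound $\norm{\bm{x}}^{2}$ would be exceeded), so $\mathfrak{Y}_{\bm{x}}$ is a countable union of finite sets. Fix any enumeration $\mathfrak{Y}_{\bm{x}}=\{\gamma_{1},\gamma_{2},\ldots\}$ and consider the partial sums $S_{N}:=\sum_{k=1}^{N}\innerproduct{\bm{x}}{\bm{v}_{\gamma_{k}}}\bm{v}_{\gamma_{k}}$.

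Next I would verify $(S_{N})$ is Cauchy: by orthonormality, $\norm{S_{N}-S_{M}}^{2}=\sum_{k=M+1}^{N}|\innerproduct{\bm{x}}{\bm{v}_{\gamma_{k}}}|^{2}$, which is the tail of the convergent series $\sum_{k\geq 1}|\innerproduct{\bm{x}}{\bm{v}_{\gamma_{k}}}|^{2}\leq\norm{\bm{x}}^{2}$ and so tends to $0$ as $M,N\to\infty$. Completeness of $\mathcal{H}$ then produces a limit vector $\bm{y}\in\mathcal{H}$.

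The subtle step---and I expect this to be where most care is required---is showing that $\bm{y}$ depends neither on the enumeration nor on the finite-subset net used to define the sum (i.e.\ that the sum is genuinely well-defined as an unordered sum indexed by $\mathfrak{Y}$). I would handle this by noting that if $\pi$ is any permutation of $\mathbb{N}$, the same Cauchy argument applies to the reordered partial sums and yields the same limit, because for any $\varepsilon>0$ one can choose a finite $F_{0}\subseteq\mathfrak{Y}_{\bm{x}}$ so large that $\sum_{\gamma\notin F_{0}}|\innerproduct{\bm{x}}{\bm{v}_{\gamma}}|^{2}<\varepsilon$, and then every partial sum containing $F_{0}$ lies within $\sqrt{\varepsilon}$ of $\bm{y}$ in norm. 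Any finite subset $F\supseteq F_{0}$ of $\mathfrak{Y}$ (not merely of $\mathfrak{Y}_{\bm{x}}$) yields the same estimate, since indices outside $\mathfrak{Y}_{\bm{x}}$ contribute zero. This confirms that the net of finite partial sums converges unconditionally to $\bm{y}\in\mathcal{H}$, which is the assertion.
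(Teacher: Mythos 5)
Your proposal is correct and follows essentially the same route as the paper: bound the coefficients via Bessel's inequality, show the support $\{\gamma : \innerproduct{\bm{x}}{\bm{v}_{\gamma}}\neq 0\}$ is at most countable by a pigeonhole argument, enumerate it, verify the partial sums are Cauchy using orthonormality, and invoke completeness of $\mathcal{H}$. Your final paragraph on independence of the enumeration (unconditional convergence of the net of finite partial sums) is a welcome addition that the paper's proof omits, since the paper fixes a single enumeration without justifying that the unordered sum is thereby well defined.
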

\begin{proof}
For the cases $|\mathfrak{Y}|<\infty$, the proposition is obviously true. So we'd just discuss those $\mathfrak{Y}$ which satisfies $|\mathfrak{Y}| \geq \omega$. From Schwarz inequality, $|\innerproduct{\bm{x}}{\bm{v}_{\gamma}}|\leqslant \norm{\bm{x}}\norm{\bm{v}_{\gamma}}=\norm{\bm{x}} \forall \gamma \in \mathfrak{Y} \forall \bm{x} \in \mathcal{H}$.\\
Notice that
\begin{displaymath}
\bigcup_{k=1}^{\infty}[\frac{1}{k+1}\norm{\bm{x}},\frac{1}{k}\norm{\bm{x}}]=(0,\norm{\bm{x}}]
\end{displaymath}
Thus if $|\{\gamma|\innerproduct{\bm{x}}{\bm{v}_{\gamma}} \neq 0\}|>\omega$, then according to pigeon-hole principle, there exists at least one interval where $[\frac{1}{k+1}\norm{\bm{x}},\frac{1}{k}\norm{\bm{x}}]$ includes infinitely-many $\norm{\innerproduct{\bm{x}}{\bm{v}_{\gamma}}}$, in which always exists enough (finitely-many) entries (\eg $(k+1)^{2}+1$) adding up to hold $\sum_{i=1}^{N}\norm{\innerproduct{\bm{x}}{\bm{v}_{\gamma_{i}}}}^{2} > \norm{\bm{x}}^{2}$, contradicting to Bessel inequality.\\As a result, $|\{\gamma|\innerproduct{\bm{x}}{\bm{v}_{\gamma}} \neq 0\}| \leqslant \omega$. Consider the infinite case, sort the countable set as $\{\gamma_{i}\}_{i=1}^{\infty}$.\\
Again refer to Bessel inequality,
\begin{displaymath}
\sum_{i=1}^{N}|\innerproduct{\bm{x}}{\bm{v}_{\gamma_{i}}} |^{2} \leqslant \norm{\bm{x}}^{2} ,\; \forall N \in \mathbb{Z}^{+}
\end{displaymath}
Thus the sum $\sum_{i=1}^{\infty}|\innerproduct{\bm{x}}{\bm{v}_{\gamma_{i}}} |^{2}$ converges.\\
Let $\bm{y}_{n}=\sum_{j=1}^{n}\innerproduct{\bm{x}}{\bm{v}_{\gamma_{j}}} \bm{v}_{\gamma_{j}},n=1,2,\cdots$, when $n>m$,
\begin{displaymath}
\norm{\bm{y}_{n}-\bm{y}_{m}}^{2}=\norm{\sum_{j=m+1}^{n}\innerproduct{\bm{x}}{\bm{v}_{\gamma_{j}}} \bm{v}_{\gamma_{j}}}^{2}=|\sum_{j=m+1}^{n}\innerproduct{\bm{x}}{\bm{v}_{\gamma_{j}}}|^{2}
\end{displaymath}
Thus the array $\{\bm{y}_{n}\}_{n=1}^{\infty}$ is a Cauchy array. For $\mathcal{H}$ is a Hilbert space, its limit exists and lies in $\mathcal{H}$. Denote the limit as $\bm{x}'$.\\
For all $\gamma_{k}$,
\begin{align*}
 \innerproduct{\bm{x}-\bm{x'}}{\bm{v}_{\gamma_{k}}}
&=\innerproduct{\bm{x} - \lim_{n\rightarrow\infty}\sum_{j=1}^{n}\innerproduct{\bm{x}}{\bm{v}_{\gamma_{j}}}\bm{v}_{\gamma_{j}}}{\bm{v}_{\gamma_{k}}}
=\innerproduct{\bm{x}}{\bm{v}_{\gamma_{k}}}-\lim_{n\rightarrow\infty}\innerproduct{\sum_{j=1}^{n}\innerproduct{\bm{x}}{\bm{v}_{\gamma_{j}}}\bm{v}_{\gamma_{j}}}{\bm{v}_{\gamma_{k}}}\\
&=\innerproduct{\bm{x}}{\bm{v}_{\gamma_{k}}}-\lim_{n\rightarrow\infty}[\sum_{j=1}^{n}\innerproduct{\bm{x}}{\bm{v}_{\gamma_{j}}}\innerproduct{\bm{v}_{\gamma_{j}}}{\bm{v}_{\gamma_{k}}}]
=\innerproduct{\bm{x}}{\bm{v}_{\gamma_{k}}}-\innerproduct{\bm{x}}{\bm{v}_{\gamma_{k}}}\\
&=0
\end{align*}
Notice that the given basis is orthonormal while the "component" of "the non-indexed $\gamma$" get $0$ from the inner product with $\bm{x}$, in other words,
\begin{align*}
 \innerproduct{\bm{x}-\bm{x}'}{\bm{v}_{\gamma}}
&=\innerproduct{\bm{x}-\lim_{n\rightarrow\infty}\innerproduct{\bm{x}}{\bm{v}_{\gamma_{j}}}\bm{v}_{\gamma_{j}}}{\bm{v}_{\gamma}}\\
&=\innerproduct{\bm{x}}{\bm{v}_{\gamma}}-\lim_{n\rightarrow\infty}\innerproduct{\sum_{j=1}^{n}\innerproduct{\bm{x}}{\bm{v}_{\gamma_{j}}}\bm{v}_{\gamma_{j}}}{\bm{v}_{\gamma}}
=\innerproduct{\bm{x}}{\bm{v}_{\gamma}}-\sum_{j=1}^{\infty}\innerproduct{\bm{x}}{\bm{v}_{\gamma_{j}}}\innerproduct{\bm{v}_{\gamma_{j}}}{\bm{v}_{\gamma}}\\
&=\innerproduct{\bm{x}}{\bm{v}_{\gamma}}-\innerproduct{\bm{x}}{\bm{v}_{\gamma}}\\
&=0
\end{align*}
Thus $\bm{x}-\bm{x}' \perp \bm{v}_{\gamma} \forall \gamma\in\mathfrak{Y}$.\\
Since the given basis is orthonormal, $\bm{x}-\bm{x}'=\bm{0}$ holds. Therefore,
\begin{align*}
\bm{x}&=\lim_{n\rightarrow\infty}\sum_{j=1}^{n}\innerproduct{\bm{x}}{\bm{v}_{\gamma_{j}}}\bm{v}_{\gamma_{j}}\\
&=\sum_{j=1}^{\infty}\innerproduct{\bm{x}}{\bm{v}_{\gamma_{j}}}\bm{v}_{\gamma_{j}}\\
&=\sum_{\gamma \in \mathfrak{Y}}\innerproduct{\bm{x}}{\bm{v}_{\gamma}}\bm{v}_{\gamma}
\end{align*}
Furthermore, $\norm{\bm{x}}^{2}=\sum_{j=1}^{\infty}|\innerproduct{\bm{x}}{\bm{v}_{\gamma_{j}}}|^{2}=\sum_{\gamma\in\mathfrak{Y}}|\innerproduct{\bm{x}}{\bm{v}_{\gamma}}|^{2}$.\\
\end{proof}
\begin{corollary}
Let $\mathcal{H}$ and $\Gamma$ be defined as above. The operator
\begin{align*}
P_{\Gamma} : H &\to \Gamma\\
\bm{x} &\mapsto \sum_{\gamma\in\mathfrak{Y}}\innerproduct{\bm{x}}{\bm{v}_{\gamma}} \bm{v}_{\gamma}
\end{align*}
is well-defined and it is an orthogonal projection operator.
\end{corollary}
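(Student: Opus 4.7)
The plan is to unpack the definition of an orthogonal projection operator, which demands showing that $P_\Gamma$ (i) lies in $\mathscr{B}(\mathcal{H})$, i.e.\ is linear and bounded, and satisfies (ii) $P_\Gamma^2 = P_\Gamma$ and (iii) $P_\Gamma^T = P_\Gamma$. Well-definedness of the map itself is almost free from the preceding proposition, which guarantees that $\sum_{\gamma\in\mathfrak{Y}}\innerproduct{\bm{x}}{\bm{v}_\gamma}\bm{v}_\gamma$ converges in $\mathcal{H}$ for every $\bm{x}$; the only extra check is that the limit lies in $\Gamma$ (viewed as closed), which follows because each partial sum is a finite linear combination of the $\bm{v}_\gamma$ and hence in $\Gamma$, and $\Gamma$ being closed contains the limit.

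First I would dispatch linearity. For $\bm{x},\bm{y}\in\mathcal{H}$ and $\alpha,\beta\in\mathbb{R}$, the supports $\{\gamma:\innerproduct{\bm{x}}{\bm{v}_\gamma}\neq 0\}$ and $\{\gamma:\innerproduct{\bm{y}}{\bm{v}_\gamma}\neq 0\}$ are each at most countable by the proposition, so their union is countable and I can enumerate it as $\{\gamma_i\}$ to compare partial sums term-by-term via the linearity of the inner product, obtaining $P_\Gamma(\alpha\bm{x}+\beta\bm{y}) = \alpha P_\Gamma\bm{x}+\beta P_\Gamma\bm{y}$. Boundedness is immediate from the Bessel/Parseval identity already established in the proposition: $\norm{P_\Gamma\bm{x}}^2=\sum_{\gamma}|\innerproduct{\bm{x}}{\bm{v}_\gamma}|^2 \leq \norm{\bm{x}}^2$, so $\norm{P_\Gamma}_{\text{op}}\leq 1$.

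For idempotence I would first pass the inner product through the limit (continuity of $\innerproduct{\cdot}{\bm{v}_{\gamma_0}}$) to compute $\innerproduct{P_\Gamma\bm{x}}{\bm{v}_{\gamma_0}} = \sum_\gamma \innerproduct{\bm{x}}{\bm{v}_\gamma}\innerproduct{\bm{v}_\gamma}{\bm{v}_{\gamma_0}} = \innerproduct{\bm{x}}{\bm{v}_{\gamma_0}}$ by orthonormality, so applying $P_\Gamma$ a second time reproduces the same coefficients and gives $P_\Gamma(P_\Gamma\bm{x})=P_\Gamma\bm{x}$. For self-adjointness I would expand both sides of $\innerproduct{P_\Gamma\bm{x}}{\bm{y}}=\innerproduct{\bm{x}}{P_\Gamma\bm{y}}$: again using continuity of the inner product in each slot, each side simplifies to $\sum_\gamma\innerproduct{\bm{x}}{\bm{v}_\gamma}\innerproduct{\bm{y}}{\bm{v}_\gamma}$, a series which converges absolutely by Cauchy--Schwarz against the two $\ell^2$-sequences of Fourier coefficients.

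I expect the only technical friction to be the bookkeeping around the countable supports: combining two different countable support sets when proving linearity, and justifying the exchange of the inner product with the defining limit when proving idempotence and self-adjointness. Both are handled uniformly by enumerating the union of supports and using continuity of the inner product together with the square-summability from the proposition, after which the four required properties drop out by direct calculation.
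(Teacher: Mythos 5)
Your proposal is correct and its decisive step --- verifying $\innerproduct{P_{\Gamma}\bm{x}}{\bm{y}}=\innerproduct{\bm{x}}{P_{\Gamma}\bm{y}}$ by passing the inner product through the defining series and using the symmetry of the real inner product --- is exactly the computation the paper performs. The remaining checks (linearity via the countable supports, boundedness via Bessel, idempotence via reproduction of the Fourier coefficients) are items the paper dismisses with ``from the discussions above,'' so you are simply making explicit what the paper leaves implicit.
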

\begin{proof}
From the discussions above, now we just need to verify that $P_{\Gamma}^{*}=P_{\Gamma}$ according to the definition of the Hilbert conjugate operator:
\begin{align*}
\innerproduct{P_{\Gamma}\bm{x}}{\bm{y}}
&=\innerproduct{\sum_{\gamma\in\mathfrak{Y}}\innerproduct{\bm{x}}{\bm{v}_{\gamma}} \bm{v}_{\gamma}}{\bm{y}}
=\sum_{\gamma\in\mathfrak{Y}}\innerproduct{\bm{x}}{\bm{v}_{\gamma}} \innerproduct{\bm{v}_{\gamma}}{\bm{y}}
=\sum_{\gamma\in\mathfrak{Y}}\innerproduct{\bm{y}}{\bm{v}_{\gamma}}\innerproduct{\bm{v}_{\gamma}}{\bm{x}}\\
&=\innerproduct{\sum_{\gamma\in\mathfrak{Y}}\innerproduct{\bm{y}}{\bm{v}_{\gamma}} \bm{v}_{\gamma}}{\bm{x}}
=\innerproduct{P_{\Gamma}\bm{y}}{\bm{x}}
=\innerproduct{\bm{x}}{P_{\Gamma}\bm{y}} \;\forall\bm{x},\bm{y}\in\mathcal{H}
\end{align*}
\end{proof}
\begin{proposition}
Given a real Hilbert space $\mathcal{H}$. Given a linear subspace $\Gamma \subseteq \mathcal{H}$ with orthonormal basis $S_{\mathfrak{A}}=\{\bm{e}_{\alpha}\}_{\alpha \in \mathfrak{A}}$ and $S_{\mathfrak{B}}=\{\bm{\varepsilon}_{\beta}\}_{\beta \in \mathfrak{B}}$. For all $\bm{x} \in H$, the sum $\sum_{\alpha\in\mathfrak{A}}\innerproduct{\bm{x}}{\bm{e}_{\alpha}}\bm{e}_{\alpha}$ and $\sum_{\beta\in\mathfrak{B}}\innerproduct{\bm{x}}{\bm{\varepsilon}_{\beta}}\bm{\varepsilon}_{\beta}$ are equal, i.e. the definition of the operator $P_{\Gamma}$ doesn't depends on the choice of the vector collection.
\begin{proof}
Denote the orthogonal complement of $\Gamma$ as $\Gamma^{\perp}$. Due to projective theorem over Hilbert spaces, for all vector $\bm{x} \in H$,$\bm{x}$ can be decomposed as $\bm{x}=\bm{x}_{1}+\bm{x}_{2}$ where $\bm{x}_{1}\in\Gamma,\bm{x}_{2}\in\Gamma^{\perp}$. Notice that $\bm{x}_{1} \perp \bm{x}_{2}$. Take an orthonormal basis $S_{\mathfrak{Y}}=\{\bm{v}_{\gamma}\}_{\gamma \in \mathfrak{Y}}$ of $\Gamma^{\perp}$, both $S_{\mathfrak{A}} \bigcup S_{\mathfrak{C}}$ and $S_{\mathfrak{B}} \bigcup S_{\mathfrak{C}}$ are orthonormal basis of $H$.
Therefore, 
\[
\sum_{\alpha\in\mathfrak{A}}\innerproduct{\bm{x}}{\bm{e}_{\alpha}}\bm{e}_{\alpha}
=\bm{x}-\sum_{\gamma\in\mathfrak{Y}}\innerproduct{\bm{x}}{\bm{v}_{\gamma}}\bm{v}_{\gamma}
=\sum_{\beta\in\mathfrak{B}}\innerproduct{\bm{x}}{\bm{\varepsilon}_{\beta}}\bm{\varepsilon}_{\beta}
\]
\end{proof}
\end{proposition}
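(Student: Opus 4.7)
My plan is to reduce the two-basis equality for $\Gamma$ to a single-basis equality for $\mathcal{H}$, so that the existence result from Proposition~2.1 can be applied directly and the conclusion falls out by cancellation of a common tail.

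Concretely, I would first invoke the projection theorem: taking $\Gamma$ to be closed (otherwise pass to its closure, which admits the same orthonormal bases), we have the orthogonal decomposition $\mathcal{H}=\Gamma\oplus\Gamma^{\perp}$. Fix any orthonormal basis $\{\bm{v}_{\gamma}\}_{\gamma\in\mathfrak{Y}}$ of $\Gamma^{\perp}$. Then both $S_{\mathfrak{A}}\cup\{\bm{v}_{\gamma}\}_{\gamma\in\mathfrak{Y}}$ and $S_{\mathfrak{B}}\cup\{\bm{v}_{\gamma}\}_{\gamma\in\mathfrak{Y}}$ are orthonormal bases of the whole space $\mathcal{H}$: orthonormality is immediate from the mutual orthogonality of $\Gamma$ and $\Gamma^{\perp}$, while totality follows because every $\bm{x}\in\mathcal{H}$ splits as $\bm{x}_{1}+\bm{x}_{2}$ with $\bm{x}_{1}\in\Gamma$ expanded by the $\bm{e}_{\alpha}$'s (or by the $\bm{\varepsilon}_{\beta}$'s) and $\bm{x}_{2}\in\Gamma^{\perp}$ expanded by the $\bm{v}_{\gamma}$'s.

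Applying Proposition~2.1 to $\bm{x}$ with respect to each of these two enlarged bases of $\mathcal{H}$ then yields
\[
\bm{x} \;=\; \sum_{\alpha\in\mathfrak{A}}\innerproduct{\bm{x}}{\bm{e}_{\alpha}}\bm{e}_{\alpha} + \sum_{\gamma\in\mathfrak{Y}}\innerproduct{\bm{x}}{\bm{v}_{\gamma}}\bm{v}_{\gamma} \;=\; \sum_{\beta\in\mathfrak{B}}\innerproduct{\bm{x}}{\bm{\varepsilon}_{\beta}}\bm{\varepsilon}_{\beta} + \sum_{\gamma\in\mathfrak{Y}}\innerproduct{\bm{x}}{\bm{v}_{\gamma}}\bm{v}_{\gamma},
\]
and cancelling the common $\Gamma^{\perp}$-tail gives the desired identity.

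The only step I expect to require genuine care is the verification that the combined set is truly an orthonormal basis of $\mathcal{H}$ rather than merely an orthonormal set: that is precisely the point at which the projection theorem (and not just abstract orthogonality) is used. Once this is granted, no new convergence analysis is needed, since everything else is absorbed into the application of Proposition~2.1.
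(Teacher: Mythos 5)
Your proposal is correct and follows essentially the same route as the paper's own proof: decompose $\mathcal{H}=\Gamma\oplus\Gamma^{\perp}$ via the projection theorem, adjoin a fixed orthonormal basis of $\Gamma^{\perp}$ to each of $S_{\mathfrak{A}}$ and $S_{\mathfrak{B}}$ to obtain two orthonormal bases of all of $\mathcal{H}$, expand $\bm{x}$ in each, and cancel the common $\Gamma^{\perp}$-tail. Your added remarks on closedness of $\Gamma$ and on verifying totality of the combined sets are points the paper glosses over, so they only strengthen the argument.
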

\begin{example}
Let $\mathcal{H}$ be a real Hilbert space.
\begin{enumerate}\addtolength{\itemsep}{-1.5ex}
\item $I:\bm{x} \mapsto \bm{x}$ is an orthogonal projection operator induced by $\mathcal{H}$.
\item $\theta:\bm{x} \mapsto \bm{0}$ is an orthogonal projection operator induced by $\varnothing$.
\end{enumerate}
\end{example}
Now we can give out the definition of the reflection operators :
\begin{defi}(\textbf{reflection operator})\\
Let $\mathcal{H}$ be a real Hilbert space. An operator $L\in\mathscr{B}(\mathcal{H})$ is a \textbf{reflection operator} if and only if there exists such an orthonormal set of $\mathcal{H}$ (namely $\{\bm{e}_{\alpha}\}_{\alpha\in\mathfrak{A}}$) and $P$ the induced orthogonal projection operator $P\in\mathscr{B}(\mathcal{H}):\bm{x}\mapsto\sum_{\alpha\in\mathfrak{A}}\innerproduct{\bm{x}}{\bm{e}_{\alpha}}\bm{e}_{\alpha}$ that satisfying $L=2P-I$.
\end{defi}
\begin{example}
Let $\mathcal{H}$ be a real Hilbert space.
\begin{enumerate}\addtolength{\itemsep}{-1.5ex}
\item For $I:\bm{x}\mapsto\bm{x}$, notice that $I$ is an orthogonal projection operator induced by $\mathcal{H}$ and satisfies $I=2I-I$, thus $I$ is a reflection operator induced by $\mathcal{H}$.
\item For $-I:\bm{x} \mapsto -\bm{x}$, notice that $\theta:\bm{x}\mapsto\bm{0}$ is an orthogonal projection operator induced by $\varnothing$ and satisfies $-I=2\theta-I$, thus $-I$ is a reflection operator induced by $\varnothing$.
\item Consider an orthonormal set $\{\bm{e}_{j}\}_{j=0}^{m}$. If $m=0$, it induces a centrosymmetry; if $m=1$, it induces an axisymmetry ; if $m=2$, it induces a mirror-reflection.
\end{enumerate}
\end{example}
\begin{theorem}
Given a real Hilbert space $\mathcal{H}$. Any reflection operator $L\in\mathscr{B}(\mathcal{H})$ is unitary (and thus orthogonal), self-adjoint and involutory.
\end{theorem}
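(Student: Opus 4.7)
The plan is to exploit the definition $L = 2P - I$, where $P$ is the orthogonal projection operator induced by the given orthonormal set. By the definition of an orthogonal projection operator we already have $P^{2} = P$ and $P^{T} = P$, and these two identities alone will yield all three claimed properties via direct algebraic manipulation. No convergence or analytic subtleties appear here: the preceding proposition and corollary have already absorbed that work into the well-definedness of $P$.

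First I would verify self-adjointness. Since transposition is linear and $I^{T} = I$, one computes $L^{T} = 2P^{T} - I^{T} = 2P - I = L$, so $L$ coincides with its Banach (hence, over the real Hilbert space $\mathcal{H}$, Hilbert) conjugate. Next I would check the involutory property by expanding
\[
L^{2} = (2P - I)^{2} = 4P^{2} - 2P - 2P + I = 4P - 4P + I = I,
\]
where the cancellation uses $P^{2} = P$. This immediately gives $L^{-1} = L$.

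Finally, combining the two previous steps produces $L^{T} L = L \cdot L = I = L \cdot L = L L^{T}$, so $L^{T} = L^{-1}$, which is the definition of orthogonality; because $\mathcal{H}$ is real, the Banach conjugate equals the Hilbert conjugate, so $L^{*} = L^{-1}$ as well, giving unitarity.

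There is no genuine obstacle in this argument; the only thing to be careful about is citing the right earlier results, namely the corollary that guarantees the induced $P$ is actually an orthogonal projection operator (so that $P^{2} = P^{T} = P$ may be used) and the definitional remark that for real Hilbert spaces the Banach and Hilbert conjugates coincide, so that orthogonality and unitarity are equivalent here.
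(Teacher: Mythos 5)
Your proof is correct, and it reaches unitarity by a slightly different (and arguably cleaner) route than the paper. The paper first verifies that $L$ preserves the inner product by expanding $\innerproduct{L\bm{x}}{L\bm{x}} = 4\innerproduct{P\bm{x}}{P\bm{x}} - 4\innerproduct{P\bm{x}}{\bm{x}} + \innerproduct{\bm{x}}{\bm{x}} = \innerproduct{\bm{x}}{\bm{x}}$ using $P^{*}P = P$, and then asserts $L\mathcal{H} = \mathcal{H}$ to conclude unitarity; only afterwards does it check self-adjointness and involutivity. You instead establish $L^{T} = L$ and $L^{2} = I$ first and then deduce $L^{T}L = LL^{T} = I$, hence $L^{T} = L^{-1}$, purely algebraically. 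This ordering buys you something real: in infinite dimensions an isometry need not be surjective (the shift operator is the standard counterexample), so the paper's step ``$L$ holds norm, $\cod L = \mathcal{H}$, therefore unitary'' quietly relies on a surjectivity claim it does not justify — though it does follow from $L^{2}=I$. Your argument gets invertibility of $L$ for free from involutivity before invoking it, so no such gap arises. The only cosmetic omission is that you could note explicitly that $L = 2P - I$ is bounded because $P$ and $I$ are, but this is already packaged into the hypothesis $L \in \mathscr{B}(\mathcal{H})$ and into the definition of the induced projection.
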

\begin{proof}
According to the definition of the reflection operators,\\ given any orthonormal set of $\mathcal{H}$ (namely $\{\bm{e}_{\alpha}\}_{\alpha\in\mathfrak{A}}$) and $P$ the induced orthogonal projection operator $P\in\mathscr{B}(\mathcal{H}):\bm{x}\mapsto\sum_{\alpha\in\mathfrak{A}}\innerproduct{\bm{x}}{\bm{e}_{\alpha}}\bm{e}_{\alpha}$, denote the reflection operator as $L=2P-I$.\\
$L$ is obviously linear since both $P$ and $I$ are linear.\\
Since \begin{align*}
\innerproduct{L\bm{x}}{L\bm{x}}&=\innerproduct{2P\bm{x}-\bm{x}}{2P\bm{x}-\bm{x}}=4\innerproduct{P\bm{x}}{P\bm{x}}-4\innerproduct{P\bm{x}}{\bm{x}}+\innerproduct{\bm{x}}{\bm{x}}\\
&=4\innerproduct{P^{*}P\bm{x}}{\bm{x}}-4\innerproduct{P\bm{x}}{\bm{x}}+\innerproduct{\bm{x}}{\bm{x}}=4\innerproduct{P\bm{x}}{\bm{x}}-4\innerproduct{P\bm{x}}{\bm{x}}+\innerproduct{\bm{x}}{\bm{x}}\\
&=\innerproduct{\bm{x}}{\bm{x}} \;\forall\bm{x}\in\mathcal{H}
\end{align*}
thus $L$ holds norm.\\
Notice that $\cod L=L\mathcal{H}=\mathcal{H}$, $L$ is unitary and therefore bounded.\\
In addition, because $L^{*}=2P^{*}-I^{*}=2P^{T}-I=2P-I=L$ , $L$ is self-adjoint.\\
According to the following calculation :
\[
(2P-I)^{2}=4P^{2}-4P+I=4P-4P+I=I
\]
$L$ is involutory.
\end{proof}
As we've proved, the reflection operators performs very well. Now we can use these operators to construct some algebraic structures.For convenience, we denote the set consists of all the reflection operators over a real Hilbert space $\mathcal{H}$ as $\mathrm{Refl}(\mathcal{H})$.
\section{Constructions of the Two Generalizations of $\mathrm{O}(n;\mathbb{R})$}
\subsection{The First Generalization : $\Theta(\kappa)$}
To state our motivation, we give the following lemma :
\begin{lemma}
For any orthogonal matrix $A$, there exists a sequence of reflection operators $\{L_{i}\}_{i=0}^{m}$ that \[A=\prod_{i=0}^{m}L_{i}\]
\end{lemma}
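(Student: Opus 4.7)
The plan is to prove this by induction on the dimension $n$, following the classical Cartan--Dieudonn\'e strategy adapted to the paper's definition of reflection operator.

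For the base case $n = 1$, every orthogonal matrix equals $\pm I$; by the examples after Definition 2.6, both $I$ (induced by $\mathbb{R}$) and $-I$ (induced by $\varnothing$) are themselves reflection operators, so the single-factor product suffices. For the inductive step, assume the claim in dimension $n-1$ and let $A \in \mathrm{O}(n;\mathbb{R})$. If $A = I$ we are done. Otherwise I would pick a unit vector $\bm{v}$ with $A\bm{v} \neq \bm{v}$, set
\[
\bm{u} = \frac{A\bm{v} - \bm{v}}{\norm{A\bm{v} - \bm{v}}},
\]
and let $L_{1}$ be the hyperplane reflection $L_{1}\bm{x} = \bm{x} - 2\innerproduct{\bm{x}}{\bm{u}}\bm{u}$. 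Using $\norm{A\bm{v}} = \norm{\bm{v}}$ one checks $L_{1}(A\bm{v}) = \bm{v}$, hence $L_{1}A$ fixes $\bm{v}$ and, being orthogonal, also stabilizes $\bm{v}^{\perp}$.

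The restriction $(L_{1}A)|_{\bm{v}^{\perp}}$ is an orthogonal transformation of the $(n-1)$-dimensional space $\bm{v}^{\perp}$, so by the inductive hypothesis it factors as a finite product of reflection operators $L_{2}', \ldots, L_{m}'$ on $\bm{v}^{\perp}$. Extending each $L_{i}'$ by declaring $L_{i}\bm{v} = \bm{v}$ yields operators $L_{i}$ on $\mathbb{R}^{n}$ which are again reflection operators in the sense of Definition 2.6: if $L_{i}'$ is induced (in $\bm{v}^{\perp}$) by the orthonormal set $S_{i}$, then $L_{i} = 2P_{\mathrm{span}(S_{i} \cup \{\bm{v}\})} - I$. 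Since $L_{1}$ is involutory by Theorem 2.1, multiplying $L_{1}A = L_{2}\cdots L_{m}$ on the left by $L_{1}$ gives $A = L_{1}L_{2}\cdots L_{m}$.

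The only delicate points are the two checks that a geometric reflection is a reflection operator in the paper's sense: first, that $L_{1}$ equals $2P - I$ for $P$ the orthogonal projection onto $\bm{u}^{\perp}$ (extend $\bm{u}$ to an orthonormal basis and invoke Corollary 2.1); and second, that extending a reflection on $\bm{v}^{\perp}$ by the identity on $\mathbb{R}\bm{v}$ corresponds to enlarging the inducing orthonormal set by $\bm{v}$. Neither is hard, but both must be recorded carefully so that the factorization lives inside $\mathrm{Refl}(\mathbb{R}^{n})$, not merely inside the abstract orthogonal group. I do not anticipate a real obstacle beyond this bookkeeping.
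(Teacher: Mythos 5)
Your proposal is correct and takes essentially the same route as the paper: one reflection sending $A\bm{v}$ back to $\bm{v}$ (the paper fixes $\bm{v}=\bm{e}_{1}$ and phrases the step as reducing the first column of the matrix), followed by recursion on the orthogonal complement with the reflections extended by the identity on $\mathbb{R}\bm{v}$. If anything, your version is more explicit than the paper's, which only asserts the existence of $L_{1}$ where you actually construct the Householder reflection $\bm{x}\mapsto\bm{x}-2\innerproduct{\bm{x}}{\bm{u}}\bm{u}$ and verify it lies in $\mathrm{Refl}(\mathbb{R}^{n})$.
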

\begin{proof}
Given the orthogonal matrix $A$, any column vector of $A$ is of norm $1$.\\
Transforms the first column of $A$. Since the column vector is orthogonal thus shares the same norm with $\begin{bmatrix}1,0,\cdots,0\end{bmatrix}^{T}$, exists $L_{1}$ (not unique) which leads to
 \begin{displaymath}
 L_{1}A=\begin{bmatrix}
 1 & \star \\
 0 & A_{1}
 \end{bmatrix}
 \end{displaymath}
 Since $L_{1}$ and $A$ are both unitary, $L_{1}A$ is unitary. Thus
 \begin{displaymath}
 \begin{bmatrix}
 1 & 0 \\
 0 & I_{n-1}
 \end{bmatrix}=\begin{bmatrix}
 1 & \star \\
 0 & A_{1}
 \end{bmatrix}
 \begin{bmatrix}
 1 & 0 \\
 \star & A_{1}^{T}
 \end{bmatrix}
 =\begin{bmatrix}
 1+\star^{2} & \star A_{1}^{T} \\
 A_{1} \star & A_{1}A_{1}^{T}
 \end{bmatrix}
 \end{displaymath}
 It implies that $\star =0$ and $A_{1}A_{1}^{T}=I$. Furthermore, because
 \begin{displaymath}
 \begin{bmatrix}
 1 & 0 \\
 0 & I_{n-1}
 \end{bmatrix}=
 \begin{bmatrix}
 1 & 0 \\
 \star & A_{1}^{T}
 \end{bmatrix}
 \begin{bmatrix}
 1 & \star \\
 0 & A_{1}
 \end{bmatrix}
 =\begin{bmatrix}
 1 & 0 \\
 0 & A_{1}^{T}A_{1}
 \end{bmatrix}
 \end{displaymath}
 we have $A_{1}^{T}A_{1}=A_{1}A_{1}^{T}=I$, i.e. $A_{1}$ is an orthogonal matrix whose type is $(n-1)\times(n-1)$.\\
 So
 \begin{displaymath}
 L_{1}A=\begin{bmatrix}
 1 & 0 \\
 0 & A_{1}
 \end{bmatrix}
 \end{displaymath}
 Again we transforms $A_{1}$ similarly, we can get $L_{2}'$ which holds
 \begin{displaymath}
 L_{2}'A_{1}=\begin{bmatrix}
 1 & 0 \\
 0 & A_{2}
 \end{bmatrix}
 \end{displaymath}
 Set $L_{2}$ to be
 \begin{displaymath}
 L_{2}=\begin{bmatrix}
 1 & 0 \\
 0 & L_{2}'
 \end{bmatrix}
 \end{displaymath}
 As we've proved, $L_{2}$ is a linear reflection operator. 
 Now we have
 \begin{displaymath}
 L_{2}L_{1}A=\begin{bmatrix}
 I_{2} & 0 \\
 0 & A_{2}
 \end{bmatrix}
 \end{displaymath}
 Apply the algorithm recursively, we can get $\{L_{i}\}_{i=1}^{k}(k \leqslant n)$ which holds the equality $(\prod_{i=1}^{k}L_{i})A=I$.\\
Multiply $A^{T}=A^{-1}$ to both sides, it gives out $\prod_{i=1}^{k}L_{i}=A^{T}$. Since all the involved matrices are self-adjoint, $A=\prod_{i=1}^{k}L_{k-i+1}$.
\end{proof}
This lemma show us the structure of the orthogonal matrices and thus the structure of the orthogonal group $\mathrm{O(n)}$  in the finite-dimensional cases. Or to be more precise,
\begin{corollary}
The set $\mathrm{Refl}(\mathbb{R}^{n})$ generates $\mathrm{O}(n;\mathbb{R})$.
\end{corollary}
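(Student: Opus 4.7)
The plan is to prove the corollary by checking the two mutual inclusions between the subgroup $\langle\mathrm{Refl}(\mathbb{R}^{n})\rangle$ generated inside $\mathbf{GL}(n)$ by the reflection operators and the full orthogonal group $\mathrm{O}(n;\mathbb{R})$.

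For the inclusion $\langle\mathrm{Refl}(\mathbb{R}^{n})\rangle \subseteq \mathrm{O}(n;\mathbb{R})$, I would appeal to the theorem proved earlier in this section establishing that every reflection operator is unitary, and hence orthogonal since we are working over $\mathbb{R}$. Because $\mathrm{O}(n;\mathbb{R})$ is itself a group and contains $\mathrm{Refl}(\mathbb{R}^{n})$, it automatically contains every finite product and every inverse of reflection operators, which by definition is everything in $\langle\mathrm{Refl}(\mathbb{R}^{n})\rangle$.

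For the reverse inclusion $\mathrm{O}(n;\mathbb{R}) \subseteq \langle\mathrm{Refl}(\mathbb{R}^{n})\rangle$, the immediately preceding lemma does all the substantive work. For each $A \in \mathrm{O}(n;\mathbb{R})$ it supplies an explicit factorisation $A = \prod_{i=0}^{m} L_{i}$ with every $L_{i}$ a reflection operator; this factorisation exhibits $A$ as a word in the generating set and so places it in $\langle\mathrm{Refl}(\mathbb{R}^{n})\rangle$.

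The genuine technical obstacle lives inside the lemma rather than inside the corollary itself: the recursive column-reduction argument that, at each step, exhibits a reflection carrying the current first column to the standard basis vector and then descends to the remaining $(n-1)\times(n-1)$ orthogonal block. At the level of the corollary there is no further nontrivial step to carry out; the proof is simply a repackaging of the lemma together with the observation that reflection operators are themselves orthogonal, so I would write it in essentially two lines.
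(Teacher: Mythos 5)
Your proposal is correct and matches the paper's intent exactly: the paper states this corollary as an immediate consequence of the preceding column-reduction lemma (which gives $\mathrm{O}(n;\mathbb{R}) \subseteq \langle\mathrm{Refl}(\mathbb{R}^{n})\rangle$) together with the earlier theorem that reflection operators are orthogonal (which gives the reverse inclusion), and offers no further argument. Your two-inclusion packaging is precisely the right level of detail for this statement.
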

From this conclusion, we construct the following set :
\begin{defi}(\textbf{The First Generalization : $\Theta(\kappa)$})\\
Let $\mathcal{H}$ be a real Hilbert space with $\dim\mathcal{H}=\kappa\in\mathbf{Card}$, then $\Theta(\kappa)$ is defined as the set consists of the combinations of finitely many elements in $\mathrm{Refl}(\mathcal{H})$ under the usual operator multiplication :
\[
\Theta(\kappa)=\{A\in\mathscr{B}(\mathcal{H}) : \exists\{L_{j}\}_{j=1}^{m}\subset\mathrm{Refl}(\mathcal{H})((m<\omega)\wedge(A=\prod_{j=1}^{m}\L_{j}))\}
\]
\end{defi}
According to the fact that real Hilbert spaces with the same Hilbert-dimension are isomorphic as inner product spaces, $\Theta(\kappa)$ is well defined.
What's more, the set constructed above is actually a group~:
\begin{proposition}
For all $\kappa \in \mathbf{Card}$,$\Theta(\kappa)$ forms a group.
\end{proposition}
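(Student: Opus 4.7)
The plan is to verify the four group axioms directly against the definition of $\Theta(\kappa)$ as the set of finite products of reflection operators in $\mathrm{Refl}(\mathcal{H})$, using the structural facts about reflections already established (most importantly, Theorem stating that every reflection operator is involutory and unitary). Associativity is inherited from composition of bounded linear operators on $\mathcal{H}$, so nothing needs to be said there beyond a mention.

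First I would dispose of closure: if $A=\prod_{j=1}^{m}L_{j}$ and $B=\prod_{k=1}^{n}M_{k}$ with the $L_{j},M_{k}\in\mathrm{Refl}(\mathcal{H})$, then $AB$ is a finite product of $m+n$ reflection operators and so lies in $\Theta(\kappa)$. Next I would produce the identity: by the first item of the Examples following the definition of reflection operator, $I=2I-I$ is itself a reflection operator (induced by the whole space $\mathcal{H}$), hence a length-one product in $\Theta(\kappa)$.

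The inverse axiom is where the definition of reflection operator is decisive, and this is the only step that uses real content of the preceding section. For any single reflection $L=2P-I$, Theorem proves $L^{2}=I$, so $L^{-1}=L\in\mathrm{Refl}(\mathcal{H})$. For a general element $A=L_{1}L_{2}\cdots L_{m}\in\Theta(\kappa)$ one therefore has
\[
A^{-1}=L_{m}^{-1}L_{m-1}^{-1}\cdots L_{1}^{-1}=L_{m}L_{m-1}\cdots L_{1},
\]
which is again a finite product of reflection operators, hence lies in $\Theta(\kappa)$. Combined with closure, identity, and inherited associativity, this exhibits $\Theta(\kappa)$ as a group under operator multiplication.

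There is essentially no hard step; the only point that could be mishandled is forgetting to check that the inverse is not merely a bounded operator but actually a member of $\Theta(\kappa)$, and that relies crucially on the involutory property of each reflection operator established earlier. So the write-up should emphasize the use of $L^{2}=I$ and the reversal of the product order.
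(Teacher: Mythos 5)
Your proposal is correct and follows essentially the same route as the paper: closure by concatenating finite products, associativity inherited from operator composition, the identity $I=2I-I$ as a reflection induced by $\mathcal{H}$, and invertibility via the involutory property $L^{2}=I$ together with reversal of the product order. No meaningful difference from the paper's argument.
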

\begin{proof}
We'll verify the proposition according to the definition of `group' step by step.
\item[Closeness :]
The product of two products of flat-reflection operators is a product of a longer flat-reflection operators sequence. Thus $\Theta(\kappa)$ closes under operator multiplication.
\item[Associativity :]Inherited from the operator multiplication.
\item[Identity :]Recall the fact that the identity operator $I$ is also a reflection operator which is induced by $\mathcal{H}$. Therefore we have $I \in \Theta(\kappa)$ as its identity.
\item[Invertibility :]As we've proved, every reflection operator is involutory, thus for any operator $A \in \Theta(\kappa)$ with the decomposition $A=\prod_{j=1}^{k}L_{j}$, we have its inverse $A^{-1}=\prod_{j=1}^{k}L_{k-j+1}\in\Theta(\kappa)$.
\end{proof}
\begin{corollary}
If $\kappa = n < \infty$, $\Theta(\kappa) = \Theta(n) \cong \mathrm{O}(n)$.
\end{corollary}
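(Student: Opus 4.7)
The plan is to pick an orthonormal basis of $\mathcal{H}$ to identify $\mathcal{H}$ with $\mathbb{R}^{n}$ as inner product spaces; this induces a group isomorphism $\Phi$ from the unit group of $\mathscr{B}(\mathcal{H})$ onto $\mathbf{GL}(n)$ which restricts to an isomorphism between the orthogonal operators on $\mathcal{H}$ and $\mathrm{O}(n)$. Under this identification, proving $\Theta(n)\cong\mathrm{O}(n)$ reduces to the set equality $\Phi(\Theta(n))=\mathrm{O}(n)$, since $\Phi$ already respects the group law.

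For the inclusion $\Phi(\Theta(n))\subseteq\mathrm{O}(n)$, I would invoke Theorem~2.1: every reflection operator is orthogonal, hence so is any finite product of reflection operators, and thus $\Phi$ sends $\Theta(n)$ into $\mathrm{O}(n)$. For the reverse inclusion I would appeal to Lemma~3.1 directly, which expresses every orthogonal matrix as a finite product of operators of the form $2P-I$; each such factor, pulled back through $\Phi^{-1}$, is a reflection operator on $\mathcal{H}$, so its product lies in $\Theta(n)$.

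The one step that still requires care — and, in my view, the main (though routine) obstacle — is a compatibility check: one must verify that each matrix $L_{i}$ produced inside Lemma~3.1 genuinely matches the paper's definition of a reflection operator, i.e.\ that the corresponding $P$ is the orthogonal projection $P_{\Gamma}$ onto a subspace $\Gamma$ spanned by an orthonormal set. This reduces to the observation that the Householder-type matrices used in Lemma~3.1 are reflections across hyperplanes of the form $\bm{v}^{\perp}$ for a unit vector $\bm{v}$, and any such hyperplane is finite-dimensional and therefore admits an orthonormal basis, so $P=P_{\bm{v}^{\perp}}$ fits the definition in Section~2. Once this is recorded, the two inclusions combine to give $\Phi(\Theta(n))=\mathrm{O}(n)$, whence $\Theta(n)\cong\mathrm{O}(n)$ as groups.
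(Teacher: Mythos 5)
Your proposal is correct and follows essentially the same route the paper intends: the paper leaves this corollary as an immediate consequence of Lemma~3.1 (equivalently, the corollary that $\mathrm{Refl}(\mathbb{R}^{n})$ generates $\mathrm{O}(n)$) together with Theorem~2.1, under the standard identification of an $n$-dimensional real Hilbert space with $\mathbb{R}^{n}$. Your extra compatibility check — that the Householder factors in Lemma~3.1 are of the form $2P_{\Gamma}-I$ for $\Gamma$ a hyperplane with an orthonormal basis, hence reflection operators in the paper's sense — is a reasonable point to record explicitly, but it does not change the argument.
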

From now on, we consider the group $\Theta(\kappa)$ as the first generalization of $\mathrm{O}(n)$.
\subsection{The Second Generalization : $\mathrm{O}(\kappa)$}
First, consider the definition of the automorphism groups of Hilbert spaces :
\begin{defi}(\textbf{automorphism group of a real Hilbert space})\\
\indent Given a real Hilbert space $\mathcal{H}$, its automorphism group $\Aut{\langle\,\mathcal{H},+,\cdot,\innerproduct{}{}\,\rangle}=\Aut{\mathcal{H}}$ is defined to be the largest subgroup of $\Aut{\langle\,\mathcal{H},+\,\rangle}$ which satisfies all of the following requirements:
\begin{enumerate}
\item $\forall\sigma\in\Aut{\mathcal{H}}$, $\sigma:\mathcal{H}\to\mathcal{H}$ is bijective.
\item $\sigma(\bm{x})+\sigma(\bm{y})=\sigma(\bm{x}+\bm{y}) \;\forall\sigma\in\Aut{\mathcal{H}}\forall\bm{x},\bm{y}\in\mathcal{H}$
\item $\sigma(k\bm{x})=k\sigma(\bm{x}) \;\forall\sigma\in\Aut{\mathcal{H}}\forall{k}\in\mathbb{R}\forall\bm{x}\in\mathcal{H}$
\item $\innerproduct{\sigma(\bm{x})}{\sigma(\bm{y})}=\innerproduct{\bm{x}}{\bm{y}} \;\forall\sigma\in\Aut{\mathcal{H}}\forall\bm{x},\bm{y}\in\mathcal{H}$
\end{enumerate}
Terminologically,such an element $\sigma\in\Aut{\mathcal{H}}$ is called an automorphism of $\mathcal{H}$.
\end{defi}
Notice that $\mathbb{R}^{n}$ is a special case of real Hilbert spaces in finite dimensions, we have :
\begin{theorem}
$\Aut{\mathbb{R}^{n}}=\mathrm{O}(n)\;\forall{n}\in\mathbb{N}$
\end{theorem}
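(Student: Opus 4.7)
The plan is to establish $\Aut{\mathbb{R}^{n}} = \mathrm{O}(n)$ by verifying the two inclusions separately, exploiting the introduction's identification
\[
\mathrm{O}(n) = \{\sigma \in \mathbf{GL}(n) : \innerproduct{\sigma\bm{x}}{\sigma\bm{y}} = \innerproduct{\bm{x}}{\bm{y}} \text{ for all } \bm{x},\bm{y} \in \mathbb{R}^{n}\}.
\]
The whole argument is then essentially a dictionary between the basis-free conditions (1)--(4) defining $\Aut{\mathbb{R}^{n}}$ and the concrete matrix relation $A^{T}A = I$.

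For the inclusion $\mathrm{O}(n) \subseteq \Aut{\mathbb{R}^{n}}$, I would take an arbitrary orthogonal matrix $A$ and check the four defining properties in turn: the existence of $A^{-1} = A^{T}$ supplies the bijectivity required by (1); linearity of matrix action gives (2) and (3); and the one-line computation $\innerproduct{A\bm{x}}{A\bm{y}} = \bm{x}^{T}A^{T}A\bm{y} = \innerproduct{\bm{x}}{\bm{y}}$ delivers (4).

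For the reverse inclusion $\Aut{\mathbb{R}^{n}} \subseteq \mathrm{O}(n)$, given any $\sigma \in \Aut{\mathbb{R}^{n}}$, I would first use conditions (2) and (3) to represent $\sigma$ by a unique matrix $A$ in the standard basis $\{\bm{e}_{i}\}_{i=1}^{n}$ via $A\bm{e}_{i} = \sigma(\bm{e}_{i})$. Then I would specialize condition (4) to pairs of basis vectors, obtaining $(A^{T}A)_{ij} = \innerproduct{A\bm{e}_{i}}{A\bm{e}_{j}} = \innerproduct{\bm{e}_{i}}{\bm{e}_{j}} = \delta_{ij}$, hence $A^{T}A = I$. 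In finite dimensions this forces $A$ to be invertible, which is automatically consistent with condition (1), so $\sigma \in \mathrm{O}(n)$.

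There is no substantive obstacle here, as the content is pure unwinding of definitions. The only subtlety worth flagging is that in finite dimensions the linearity supplied by (2)--(3) automatically entails boundedness of $\sigma$, so no analytic side condition need be added; this is precisely the feature that will fail in the cardinal-dimensional generalization $O(\kappa)$ treated in the subsequent sections, making this finite-dimensional identification the base case against which the later infinite-dimensional phenomena will be contrasted.
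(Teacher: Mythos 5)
Your proof is correct, but in the key inclusion $\Aut{\mathbb{R}^{n}}\subseteq\mathrm{O}(n)$ you take a genuinely different route from the paper. The paper argues basis-freely: from $\innerproduct{\sigma\bm{x}}{\bm{y}}=\innerproduct{\sigma^{-1}\sigma\bm{x}}{\sigma^{-1}\bm{y}}=\innerproduct{\bm{x}}{\sigma^{-1}\bm{y}}$ together with $\innerproduct{\sigma\bm{x}}{\bm{y}}=\innerproduct{\bm{x}}{\sigma^{T}\bm{y}}$, the uniqueness of the adjoint forces $\sigma^{-1}=\sigma^{T}$. You instead fix the standard basis, represent $\sigma$ by a matrix $A$, and compute $(A^{T}A)_{ij}=\innerproduct{A\bm{e}_{i}}{A\bm{e}_{j}}=\delta_{ij}$. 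Both are valid; yours is more elementary and concrete, and your closing remark about boundedness being automatic in finite dimensions is a genuine observation. What the paper's formulation buys is that its argument is recycled verbatim in the very next theorem to prove $\{A\in\mathscr{B}(\mathcal{H}):A^{-1}=A^{T}\}=\Aut{\mathcal{H}}$ for a Hilbert space of arbitrary dimension, whereas your entrywise computation would need to be restated (orthonormal basis, convergence of the expansion) to transfer. One small point of care in your version: you should note explicitly that the map $\sigma\mapsto A$ is legitimate because conditions (2)--(3) make $\sigma$ linear, and that $A^{T}A=I$ already certifies membership in $\mathbf{GL}(n)$ as required by the introduction's definition of $\mathrm{O}(n)$ --- which you do flag, so the argument is complete.
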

\begin{proof}
Obviously $\mathrm{O}(n)\subseteq\Aut{\mathbb{R}^{n}}$.\\
On the opposite direction, $\forall\sigma\in\Aut{\mathbb{R}^{n}}\forall\bm{x},\bm{y}\in\mathbb{R}^{n}$,
\begin{align*}
&\innerproduct{\sigma\bm{x}}{\bm{y}}=\innerproduct{\sigma^{-1}\sigma\bm{x}}{\sigma^{-1}\bm{y}}=\innerproduct{\bm{x}}{\sigma^{-1}\bm{y}}\\
&\innerproduct{\sigma\bm{x}}{\bm{y}}=\innerproduct{\bm{x}}{\sigma^{*}\bm{y}}=\innerproduct{\bm{x}}{\sigma^{T}\bm{y}}
\end{align*}
which indicates that $\sigma\in\mathrm{O}(n)\;\forall\sigma\in\Aut{\mathbb{R}^{n}}$ according to the uniqueness of an operator's Hilbert conjugate. Thus $\Aut{\mathbb{R}^{n}}\subseteq\mathrm{O}(n)$.\\
Therefore the underlying sets $\Aut{\mathbb{R}^{n}}=\mathrm{O}(n)$. Recall the fact that the two groups share the same group operation, they're identical as groups.
\end{proof}
Generalizing to infinite-dimensional real Hilbert spaces, we have :
\begin{theorem}
Given a real Hilbert space $\mathcal{H}$ with any dimension,
\[
\{ A \in \mathscr{B}(\mathcal{H}) : A^{-1}=A^{T}\}=\Aut{\mathcal{H}}
\]
\end{theorem}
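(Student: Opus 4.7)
The plan is to prove the set equality by showing both inclusions, essentially lifting the finite-dimensional argument given just above while being careful that the Banach (equivalently, Hilbert) adjoint $\sigma^{T}$ is actually well-defined in the infinite-dimensional setting, which requires establishing boundedness separately.

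For the inclusion $\subseteq$, I would take $A \in \mathscr{B}(\mathcal{H})$ with $A^{-1} = A^{T}$ and verify the four defining properties of $\Aut{\mathcal{H}}$. The existence of $A^{-1}$ immediately gives bijectivity; linearity over addition and scalar multiplication is built into $A \in \mathscr{B}(\mathcal{H})$. For inner product preservation, the one-line computation
\[
\innerproduct{A\bm{x}}{A\bm{y}} = \innerproduct{A^{T}A\bm{x}}{\bm{y}} = \innerproduct{A^{-1}A\bm{x}}{\bm{y}} = \innerproduct{\bm{x}}{\bm{y}}
\]
closes this direction.

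For the inclusion $\supseteq$, the main task is to show that an arbitrary $\sigma \in \Aut{\mathcal{H}}$ actually lies in $\mathscr{B}(\mathcal{H})$ so that the transpose $\sigma^{T}$ has meaning. I would first observe that inner-product preservation (requirement 4 with $\bm{y} = \bm{x}$) forces $\|\sigma\bm{x}\| = \|\bm{x}\|$ for every $\bm{x} \in \mathcal{H}$, so $\sigma$ is an isometry of norm $1$ and hence bounded. Since $\sigma^{-1}$ is again an element of $\Aut{\mathcal{H}}$ (the group is closed under inverses), the same argument yields $\sigma^{-1} \in \mathscr{B}(\mathcal{H})$. Once boundedness is in place, the identical computation used in the $\mathbb{R}^{n}$ case applies: from
\[
\innerproduct{\sigma\bm{x}}{\bm{y}} = \innerproduct{\sigma^{-1}\sigma\bm{x}}{\sigma^{-1}\bm{y}} = \innerproduct{\bm{x}}{\sigma^{-1}\bm{y}}
\]
together with the defining identity $\innerproduct{\sigma\bm{x}}{\bm{y}} = \innerproduct{\bm{x}}{\sigma^{T}\bm{y}}$ for the Banach adjoint, the uniqueness of the adjoint forces $\sigma^{T} = \sigma^{-1}$, i.e.\ $\sigma$ belongs to the left-hand side.

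The only conceptual obstacle is the boundedness step: in the finite-dimensional proof reproduced in the excerpt, boundedness came for free from finite dimensionality, but for infinite-dimensional $\mathcal{H}$ one must explicitly extract it from the inner product preservation axiom. Once that is noted, the rest of the argument is a direct rerun of the finite-dimensional case, so I expect the full proof to be very short — essentially a boundedness remark followed by the same adjoint-uniqueness argument.
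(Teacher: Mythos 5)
Your proof is correct and follows essentially the same route as the paper's: both inclusions reduce to the adjoint-uniqueness computation $\innerproduct{\sigma\bm{x}}{\bm{y}}=\innerproduct{\bm{x}}{\sigma^{-1}\bm{y}}=\innerproduct{\bm{x}}{\sigma^{T}\bm{y}}$. Your explicit boundedness step --- extracting $\norm{\sigma\bm{x}}=\norm{\bm{x}}$ from inner-product preservation so that $\sigma^{T}$ is even well-defined --- is a point the paper silently skips, and it is genuinely needed here since the definition of $\Aut{\mathcal{H}}$ does not assume continuity of $\sigma$.
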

\begin{proof}
Denote the mentioned set as $S$. Obviously $S\subseteq\Aut{\mathcal{H}}$.\\
On the opposite direction, $\forall\sigma\in\Aut{\mathcal{H}}\forall\bm{x},\bm{y}\in\mathcal{H}$,
\begin{align*}
&\innerproduct{\sigma\bm{x}}{\bm{y}}=\innerproduct{\sigma^{-1}\sigma\bm{x}}{\sigma^{-1}\bm{y}}=\innerproduct{\bm{x}}{\sigma^{-1}\bm{y}}\\
&\innerproduct{\sigma\bm{x}}{\bm{y}}=\innerproduct{\bm{x}}{\sigma^{*}\bm{y}}=\innerproduct{\bm{x}}{\sigma^{T}\bm{y}}
\end{align*}
which indicates that $\sigma\in{S}\;\forall\sigma\in\Aut{\mathcal{H}}$ according to the uniqueness of an operator's Hilbert conjugate. Thus $\Aut{\mathcal{H}}\subseteq{S}$.\\
Therefore the underlying set $\Aut{\mathcal{H}}=S$, i.e. $\{ A \in \mathscr{B}(\mathcal{H}) : A^{-1}=A^{T}\}=\Aut{\mathcal{H}}$.
\end{proof}
To make the second generalization to be given well-defined, we give out the following lemma :
\begin{lemma}
Let $\mathcal{H}_{1}$ and $\mathcal{H}_{2}$ be two real Hilbert spaces.
\[
\dim\mathcal{H}_{1}=\dim\mathcal{H}_{2} \iff \Aut{\mathcal{H}_{1}}\cong\Aut{\mathcal{H}_{2}}
\]
\end{lemma}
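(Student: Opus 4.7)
The plan is to split the biconditional into two implications. The forward direction is essentially a transport argument built on isometric isomorphism of Hilbert spaces, while the reverse direction is the real content and amounts to recovering $\dim\mathcal{H}$ from purely group-theoretic data.

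For ($\Rightarrow$), I would fix orthonormal bases $\{\bm{e}_{\alpha}\}_{\alpha\in\mathfrak{A}}$ of $\mathcal{H}_{1}$ and $\{\bm{f}_{\alpha}\}_{\alpha\in\mathfrak{A}}$ of $\mathcal{H}_{2}$, indexed by a common set since the two dimensions coincide. Using the earlier proposition on orthonormal expansions, the assignment $\bm{e}_{\alpha}\mapsto\bm{f}_{\alpha}$ extends by
\[
\varphi\Bigl(\sum_{\alpha\in\mathfrak{A}}\innerproduct{\bm{x}}{\bm{e}_{\alpha}}\bm{e}_{\alpha}\Bigr)=\sum_{\alpha\in\mathfrak{A}}\innerproduct{\bm{x}}{\bm{e}_{\alpha}}\bm{f}_{\alpha}
\]
to a bounded bijective linear isometry $\varphi:\mathcal{H}_{1}\to\mathcal{H}_{2}$. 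The conjugation map $\Phi:\sigma\mapsto\varphi\circ\sigma\circ\varphi^{-1}$ then transports each of the four defining conditions of $\Aut{\mathcal{H}_{1}}$ into those of $\Aut{\mathcal{H}_{2}}$, producing the required group isomorphism.

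For ($\Leftarrow$), my approach is to attach to $\Aut{\mathcal{H}}$ a cardinal invariant that necessarily equals $\dim\mathcal{H}$, so that any abstract group isomorphism forces the two dimensions to agree. The candidate I would try is the supremum of ranks of elementary abelian $2$-subgroups: on the one hand, for any orthonormal basis $\{\bm{e}_{\alpha}\}_{\alpha\in\mathfrak{A}}$ the single-vector sign-change reflections $L_{\alpha}:\bm{v}\mapsto\bm{v}-2\innerproduct{\bm{v}}{\bm{e}_{\alpha}}\bm{e}_{\alpha}$ are pairwise commuting involutions in $\Aut{\mathcal{H}}$ generating an elementary abelian $2$-subgroup of rank $\dim\mathcal{H}$; on the other hand, any commuting family of involutions in $\Aut{\mathcal{H}}$ admits a joint $\pm 1$-eigenspace decomposition of $\mathcal{H}$, which should bound the rank of any such subgroup by $\dim\mathcal{H}$. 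Since both "elementary abelian $2$-subgroup" and its rank are preserved under abstract group isomorphism, this invariant recovers $\dim\mathcal{H}$ purely from the group.

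The main obstacle will be the simultaneous-diagonalisation step at infinite cardinality: I need to confirm that an arbitrary family of mutually commuting involutions in $\Aut{\mathcal{H}}$ still decomposes $\mathcal{H}$ into a Hilbert sum of joint $\pm 1$-eigenspaces whose cardinality matches the abstract rank of the generated subgroup, without collapsing through limit phenomena such as inseparable commuting towers. A secondary nuisance is the small-dimensional edge cases $\kappa=0,1$, where $\Aut{\mathcal{H}}$ degenerates to $\{I\}$ or $\{\pm I\}$; these have to be separated off and distinguished by the cardinality of the group itself, which is consistent with the general recipe giving rank $0$ or $1$ respectively.
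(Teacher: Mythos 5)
Your forward implication is correct and is exactly what the paper's one-line proof amounts to: two real Hilbert spaces of equal Hilbert dimension are isometrically isomorphic, and conjugation by that isometry transports $\Aut{\mathcal{H}_{1}}$ onto $\Aut{\mathcal{H}_{2}}$. This is also the only direction the paper actually needs (well-definedness of $O(\kappa)$ up to isomorphism), and the paper's own proof really establishes nothing more than this direction either.

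The reverse implication as you propose it has a genuine gap: the invariant ``supremum of ranks of elementary abelian $2$-subgroups'' does \emph{not} equal $\dim\mathcal{H}$ when $\kappa=\dim\mathcal{H}$ is infinite. Fix an orthonormal basis $\{\bm{e}_{\alpha}\}_{\alpha\in\mathfrak{A}}$ with $|\mathfrak{A}|=\kappa$ and consider the \emph{full} diagonal sign-change group $D=\{\pm 1\}^{\mathfrak{A}}$, acting by $\bm{e}_{\alpha}\mapsto\epsilon_{\alpha}\bm{e}_{\alpha}$; every such operator is orthogonal, so $D\leq\Aut{\mathcal{H}}$ is an elementary abelian $2$-group of cardinality $2^{\kappa}$, and an infinite elementary abelian $2$-group has $\mathbb{F}_{2}$-rank equal to its cardinality, hence rank $2^{\kappa}>\kappa$. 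Your proposed upper bound fails precisely at the step you flagged: even when a spanning joint $\pm 1$-eigenspace decomposition exists, it only shows that a commuting family $G$ of involutions embeds into $\{\pm 1\}^{X}$ with $|X|\leq\kappa$ the number of joint eigenspaces, which bounds the rank by $2^{\kappa}$, not by $\kappa$ (and the decomposition itself can fail outright, e.g.\ multiplication by $\pm 1$-valued measurable functions on $L^{2}[0,1]$ gives a commuting family of involutions with no nonzero joint eigenvector). Worse, the corrected invariant $2^{\kappa}$ cannot recover $\kappa$, since $2^{\aleph_{0}}=2^{\aleph_{1}}$ is consistent with ZFC, so this route cannot be repaired by tightening the estimate; establishing $(\Leftarrow)$ requires a different invariant or a Dieudonn\'{e}--Rickart-style analysis of abstract isomorphisms of orthogonal groups.
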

\begin{proof}
Immediately followed after the fact that $\dim\mathcal{H}_{1}=\dim\mathcal{H}_{2} \iff \mathcal{H}_{1}\cong\mathcal{H}_{2}$.
\end{proof}
Here comes the second generalization of the orthogonal groups :
\begin{defi}(\textbf{The Second Generalization : $O(\kappa)$}):\\
Let $\kappa\in\mathbf{Card}$ and $\mathcal{H}$ a real Hilbert space satisfying $\dim\mathcal{H}=\kappa$. $O(\kappa)$ is then defined as the automorphism group of $\mathcal{H}$, i.e. $O(\kappa)=\Aut{\mathcal{H}}$.
\end{defi}
According to the lemma given above, $O(\kappa)$ is unique up to isomorphisms.And as we've proved,
\begin{theorem}
\[
O(n) \cong \mathrm{O}(n) \;\forall n\in\mathbb{N}
\]
where $\mathrm{O}(n)$ is the common orthogonal group and $O(n)$ the generalization in finite-dimensional cases.
\end{theorem}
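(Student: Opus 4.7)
The statement is essentially a corollary packaging three results that were just established, so the plan is to chain them together cleanly rather than do anything new.

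First I would invoke the definition: $O(n)$ is defined as $\Aut{\mathcal{H}}$ for any real Hilbert space $\mathcal{H}$ with $\dim\mathcal{H}=n$, and the preceding lemma guarantees that this is unambiguous up to isomorphism because $\Aut{\mathcal{H}_1}\cong\Aut{\mathcal{H}_2}$ whenever $\dim\mathcal{H}_1=\dim\mathcal{H}_2$. So I am free to pick a convenient representative of dimension $n$.

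Next I would take the representative $\mathcal{H}=\mathbb{R}^{n}$ equipped with its standard inner product, which is a real Hilbert space of dimension $n$. Applying the definition with this choice gives $O(n)=\Aut{\mathbb{R}^{n}}$ as groups (not merely up to isomorphism). Then the already-proved theorem $\Aut{\mathbb{R}^{n}}=\mathrm{O}(n)$ finishes the identification, so composing the two equalities yields $O(n)=\mathrm{O}(n)$, and in particular $O(n)\cong\mathrm{O}(n)$.

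I do not anticipate any real obstacle: the content has already been done in the earlier theorem $\Aut{\mathbb{R}^{n}}=\mathrm{O}(n)$ (which ran the inner-product characterization $\langle\sigma\bm{x},\bm{y}\rangle=\langle\bm{x},\sigma^{T}\bm{y}\rangle$ and the uniqueness of the adjoint), and the only thing to watch for is a notational one, namely to be explicit that the ``well-definedness up to isomorphism'' of $O(\kappa)$ allows us to evaluate it on the specific representative $\mathbb{R}^{n}$ rather than on some abstract $n$-dimensional $\mathcal{H}$. Once that is spelled out, the proof is a one-line concatenation.
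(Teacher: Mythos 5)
Your proposal is correct and matches the paper exactly: the paper states this theorem with the remark ``as we've proved,'' relying on precisely the chain you describe --- the well-definedness of $O(\kappa)$ up to isomorphism, the choice of representative $\mathbb{R}^{n}$, and the earlier theorem $\Aut{\mathbb{R}^{n}}=\mathrm{O}(n)$. Your only addition is to make explicit the (harmless) step of evaluating on the specific representative, which the paper leaves implicit.
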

Notice that $O(\omega)$ (and even $\Theta(\omega)$) are different from the stable orthogonal group $\mathrm{O}=\varinjlim{\mathrm{O}(n)}=\bigcup_{n=0}^{\infty}\mathrm{O}(n)$, e.g. consider the reflection induced by $\bm{x}=\{x_{i}=\frac{1}{2^{i}}\}_{i=1}^{\infty}$ which lies in $\Theta(\omega)$ (and thus in $O(\omega)$) but not in $\mathrm{O}$. To be precise, $\mathrm{O}\subset\Theta(\omega)\trianglelefteq{O(\omega)}$ as we'll see.
\subsection{The At-First-Sight Relationship between $\Theta(\kappa)$ and $O(\kappa)$}
\begin{theorem}
For any $\kappa \in \mathbf{Card}$, $\Theta(\kappa)$ is an normal subgroup of $O(\kappa)$, i.e.
$
\Theta(\kappa) \trianglelefteq O(\kappa) \forall \kappa \in \mathbf{Card}
$.
\end{theorem}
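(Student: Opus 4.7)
The plan is to first note that $\Theta(\kappa)$ is a subgroup of $O(\kappa)$, and then to verify normality by showing that conjugation by any automorphism carries reflection operators to reflection operators. For the inclusion $\Theta(\kappa)\leq O(\kappa)$, I would invoke the earlier theorem that every reflection operator $L$ is orthogonal, i.e.\ $L^{T}=L^{-1}$; by the characterization $O(\kappa)=\{A\in\mathscr{B}(\mathcal{H}):A^{-1}=A^{T}\}$ just proved, each $L\in\mathrm{Refl}(\mathcal{H})$ lies in $O(\kappa)$, so $O(\kappa)$ contains all finite products of reflections. Combined with the earlier proposition that $\Theta(\kappa)$ is itself a group under operator composition, this gives the subgroup claim.

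For normality I would take arbitrary $A\in O(\kappa)$ and $B=\prod_{j=1}^{m}L_{j}\in\Theta(\kappa)$, and use $ABA^{-1}=\prod_{j=1}^{m}(AL_{j}A^{-1})$ to reduce to proving that $ALA^{-1}\in\mathrm{Refl}(\mathcal{H})$ for every reflection $L$. Writing $L=2P-I$ with $P$ the orthogonal projection induced by an orthonormal set $\{\bm{e}_{\alpha}\}_{\alpha\in\mathfrak{A}}$, the key claim will be that $ALA^{-1}=2Q-I$, where $Q$ is the projection induced by $\{A\bm{e}_{\alpha}\}_{\alpha\in\mathfrak{A}}$; this latter set is orthonormal because $A$ preserves the inner product. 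To verify the claim I would compute, using continuity of $A$ to interchange it with the sum defining $P$ and then using $A^{-1}=A^{T}$ to transfer $A$ through the inner product,
\[
APA^{-1}\bm{x}=A\sum_{\alpha\in\mathfrak{A}}\innerproduct{A^{-1}\bm{x}}{\bm{e}_{\alpha}}\bm{e}_{\alpha}=\sum_{\alpha\in\mathfrak{A}}\innerproduct{\bm{x}}{A\bm{e}_{\alpha}}A\bm{e}_{\alpha}=Q\bm{x}.
\]

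The only delicate point is the interchange of $A$ with the potentially infinite (but countably supported) sum defining $P\bm{x}$, which is justified by boundedness of $A$ together with the convergence established in the earlier proposition on orthogonal projections. Beyond that I anticipate no real obstacle; the argument is essentially the observation that automorphisms carry orthonormal sets to orthonormal sets, and that a reflection depends $A$-equivariantly on the orthonormal set inducing it, so conjugation by $A$ acts on $\mathrm{Refl}(\mathcal{H})$ by transporting the generating set.
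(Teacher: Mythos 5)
Your proposal is correct and follows essentially the same route as the paper: both establish the inclusion via orthogonality of reflections and then verify normality by showing that conjugation transports the orthonormal set inducing a reflection (the paper computes $X^{T}L_{i}X$ with the set $\{X^{T}\bm{e}_{\alpha}\}$, which is your computation with $A$ replaced by $A^{-1}$). Your explicit attention to interchanging $A$ with the infinite sum is a small point of extra care that the paper's version passes over silently.
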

\begin{proof}
First, we should prove that $\Theta(\kappa) \subseteq O(\kappa)$. Since every reflection operator is orthogonal, so is their product which satisfies the requirement above. Thus $\Theta(\kappa)$ is a subgroup of $O(\kappa)$.\\
Now consider an operator $A \in \Theta(\kappa)$ which can be written as $A=\prod_{i=0}^{m}L_{i}$ where $L_i(i=0,\cdots,m)$ are reflection operators. Let $X \in O(\kappa)$. Then\[
X^{-1}AX=X^{-1}(\prod_{i=0}^{m}L_{i})X=\prod_{i=0}^{m}X^{-1}L_{i}X=\prod_{i=0}^{m}X^{T}L_{i}X
\]
Notice that by definition $L_{i}(i=0,\cdots,m)$ has the form $\bm{x}\mapsto -I\bm{x}+2\sum_{\alpha \in \mathfrak{A}}\innerproduct{\bm{x}}{\bm{e}_{\alpha}}\bm{e}_{\alpha}$ where $\{\bm{e}_{\alpha}\}_{\alpha\in\mathfrak{A}}$ is an orthonormal vector collection. Thus \[X^{T}L_{i}X:\bm{x}\mapsto -X^{T}IX\bm{x}+2\sum_{\alpha \in \mathfrak{A}}X^{T}\innerproduct{X\bm{x}}{\bm{e}_{\alpha}}\bm{e}_{\alpha}=-\bm{x}+2\sum_{\alpha \in \mathfrak{A}}\innerproduct{\bm{x}}{X^{T}\bm{e}_{\alpha}}(X^{T}\bm{e}_{\alpha})\]
It implies that $X^{-1}L_{i}X=X^{T}L_{i}X (i=0,\cdots,m)$ are also reflection operators, i.e. \[
X^{-1}AX \in \Theta(\kappa) \subseteq O(\kappa) \forall A\in\Theta(\kappa)\forall X \in O(\kappa)
\]
In one word, $
\Theta(\kappa) \trianglelefteq O(\kappa) \forall \kappa \in \mathbf{Card}
$.
\end{proof}
Due to this theorem, $\Theta(\kappa)$ seems to have a stronger requirement than $O(\kappa)$.
\section{Two Generalizations of the Euclidean Groups}
\subsection{Motivation}
As a demo, we'd like to generalize the Euclidean groups $\mathrm{E}(n)$ by the way. For convenience, first we give out the definition of the translation groups $\mathrm{T}(n)$ for finite-dimensional cases.
\begin{defi}(\textbf{Translation Group of finite dimension :} $\mathrm{T}(n)$)\\
\indent For finite dimension $n$, the \textbf{translation group of dimension $n$ namely $\mathrm{T}(n)$} is defined as the additive group structure of $n$-dimensional Euclidean space $\mathbb{R}^{n}$, i.e. 
\[\mathrm{T}(n)=\langle \mathbb{R}^{n} , + \rangle\]
\end{defi}
In this section, the homomorphism $\varphi$ is defined as follows (which will be frequently used soon):
\begin{align*}
\varphi : \mathrm{O}(n) &\rightarrow \Aut{\mathrm{T}(n)}\\
M &\mapsto (\varphi_{M}:\bm{x}\mapsto M\bm{x})
\end{align*}
This homomorphism induces an operation $\cdot_{\varphi}$ on $\mathrm{T}(n) \times \mathrm{O}(n)$ which makes the Cartesian product be a group.
\begin{align*}
\cdot_{\varphi} : (\mathrm{T}(n)\times\mathrm{O}(n))\times(\mathrm{T}(n)\times\mathrm{O}(n))&\rightarrow(\mathrm{T}(n)\times\mathrm{O}(n))\\
((\bm{x}_{1},M_{1}),(\bm{x}_{2},M_{2})) &\mapsto (\bm{x}_{1}+\varphi_{M_{1}}(\bm{x}_{2}),M_{1}M_{2})=(\bm{x}_{1}+M_{1}\bm{x}_{2},M_{1}M_{2})
\end{align*}
This leads us to the semidirect product $\mathrm{T}(n) \rtimes_{\varphi} \mathrm{O}(n)=\langle \mathrm{T}(n)\times\mathrm{O}(n) \, , \, \cdot_{\varphi} \rangle$ where `$\times$' denotes the Cartesian product. In fact, it's exactly the Euclidean group $\mathrm{E}(n)=\mathrm{ISO}(\mathbb{R}^{n})$ with action $\mu$ on $\mathbb{R}^{n}$:
\begin{align*}
\mu : \mathrm{E}(n) \times \mathbb{R}^{n} &\rightarrow \mathbb{R}^{n}\\
((M,\bm{v}),\bm{x})&\mapsto M\bm{x}+\bm{v}
\end{align*}
The construction above can be easily generalized via either $\Theta(\kappa)$ or $O(\kappa)$ since $\mathrm{O}(n)$ is just their special case for finite dimension. This statement leads to the following two generalizations of Euclidean groups on infinite-dimensional Hilbert spaces.
\subsection{Constructions}
Since $\mathbb{R}^{n}$ can be seen as a real finite-dimensional Hilbert space, the translation group can be defined on infinite dimensions as following:
\begin{defi}(\textbf{Translation Group of any dimension :} $\mathrm{T}(\kappa)$)\\
\indent For any dimension $\kappa \in \mathbf{Card}$, the \textbf{translation group of dimension $\kappa$ namely $\mathrm{T}(\kappa)$} is defined as the additive group structure of $\kappa$-dimensional real Hilbert space $\mathcal{H}$, i.e. 
\[\mathrm{T}(n)=\langle \mathcal{H} , + \rangle ,\;(\dim \mathcal{H} = \kappa)\]
which is unique up to isomorphism.
\end{defi}
Notice that $\mathrm{O}(n)$ is the special case of $\Theta(\kappa)$ and $O(\kappa)$ on finite dimensions. Therefore, $\varphi$ can be extended to $O(\kappa)$ and then restricted to $\Theta(\kappa)$. For convenience we still use the notation `$\varphi$' :
\begin{align*}
\varphi : O(\kappa) &\rightarrow \Aut{\mathrm{T}(\kappa)}\\
M &\mapsto (\varphi_{M}:\bm{x}\mapsto M\bm{x})
\end{align*}
This homomorphism also induces an operation $\cdot_{\varphi}$ on $\mathrm{T}(\kappa) \times \mathrm{O}(\kappa)$ which makes the set another group.
\begin{align*}
\cdot_{\varphi} : (\mathrm{T}(\kappa)\times O(\kappa))\times(\mathrm{T}(\kappa)\times O(\kappa))&\rightarrow(\mathrm{T}(\kappa)\times O(\kappa))\\
((\bm{x}_{1},M_{1}),(\bm{x}_{2},M_{2})) &\mapsto (\bm{x}_{1}+\varphi_{M_{1}}(\bm{x}_{2}),M_{1}M_{2})=(\bm{x}_{1}+M_{1}\bm{x}_{2},M_{1}M_{2})
\end{align*}
The semidirect product $\mathrm{T}(\kappa)\rtimes_{\varphi} O(\kappa)$ induces a generalization of the Euclidean groups. Restrict $\varphi$ on $\Theta(\kappa)$, the semidirect product $\mathrm{T}(\kappa)\rtimes_{\varphi} \Theta(\kappa)$ offers another generalization.
\begin{defi}(\textbf{Two Generalizations of Euclidean Group :} $\Xi(\kappa)$ \textbf{and} $E(\kappa)$)\\
\indent The homomorphism $\varphi$ is the same as given above and let $\kappa \in \mathbf{Card}$. The \textbf{broken Euclidean group} $\Xi(\kappa)$ is defined as the semidirect product $\Xi(\kappa)=\mathrm{T}(\kappa)\rtimes_{\varphi} \Theta(\kappa)$. The \textbf{full Euclidean group} $E(\kappa)$ is defined as the semidirect product $E(\kappa)=\mathrm{T}(\kappa)\rtimes_{\varphi} O(\kappa)$.
\end{defi}
Their odd names are from the following theorem:
\begin{theorem}
For any $\kappa\in\mathbf{Card}$,$\Xi(\kappa)$ is a normal subgroup of $E(\kappa)$, i.e. : $\Xi(\kappa) \trianglelefteq E(\kappa)\,\forall\kappa\in\mathbf{Card}$.
\end{theorem}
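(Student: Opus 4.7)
The plan is to deduce this normality directly from the already-established $\Theta(\kappa)\trianglelefteq O(\kappa)$, exploiting the fact that $\Xi(\kappa)=\mathrm{T}(\kappa)\rtimes_{\varphi}\Theta(\kappa)$ and $E(\kappa)=\mathrm{T}(\kappa)\rtimes_{\varphi}O(\kappa)$ are built as semidirect products with respect to the \emph{same} action $\varphi$ (merely restricted in the first case). The underlying principle is general: whenever $H\trianglelefteq G$ and one forms the semidirect products $N\rtimes H$ and $N\rtimes G$ with a common action of $G$ on $N$, one has $N\rtimes H\trianglelefteq N\rtimes G$. The task is simply to verify this principle in the concrete setting at hand.

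First I would check that $\Xi(\kappa)\leq E(\kappa)$. Since $\Theta(\kappa)\subseteq O(\kappa)$ and the operation $\cdot_{\varphi}$ on $\Xi(\kappa)$ is literally the restriction of the operation on $E(\kappa)$, closure, associativity, identity and inverses all transfer automatically, so no work is needed beyond noting the inclusion.

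Next I would compute the conjugation $(\bm{y},X)^{-1}(\bm{x},A)(\bm{y},X)$ for an arbitrary $(\bm{y},X)\in E(\kappa)$ and $(\bm{x},A)\in\Xi(\kappa)$. A direct check shows $(\bm{y},X)^{-1}=(-X^{-1}\bm{y},\,X^{-1})$, and applying the multiplication rule $\cdot_{\varphi}$ twice produces a pair of the form $(\bm{v},\,X^{-1}AX)$, where the translation part $\bm{v}=-X^{-1}\bm{y}+X^{-1}\bm{x}+X^{-1}A\bm{y}$ lies in $\mathrm{T}(\kappa)=\mathcal{H}$ automatically. The only substantive question is whether the second coordinate $X^{-1}AX$ belongs to $\Theta(\kappa)$.

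That last step is precisely the content of the theorem of the previous subsection: since $\Theta(\kappa)\trianglelefteq O(\kappa)$ and $X\in O(\kappa)$, $A\in\Theta(\kappa)$, we get $X^{-1}AX\in\Theta(\kappa)$, so $(\bm{v},\,X^{-1}AX)\in\Xi(\kappa)$, as required. There is no genuine obstacle here; the hard work has already been done in establishing $\Theta(\kappa)\trianglelefteq O(\kappa)$, and the present statement amounts to a bookkeeping exercise with the semidirect-product multiplication law.
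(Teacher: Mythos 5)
Your proposal is correct and follows essentially the same route as the paper: compute $(\bm{m},M)^{-1}=(-M^{-1}\bm{m},M^{-1})$, conjugate to get $(-M^{-1}\bm{m}+M^{-1}\bm{a}+M^{-1}A\bm{m},\,M^{-1}AM)$, note the translation part is automatically in $\mathrm{T}(\kappa)$, and reduce the whole matter to $\Theta(\kappa)\trianglelefteq O(\kappa)$. Your explicit remark that this is an instance of the general fact $N\rtimes H\trianglelefteq N\rtimes G$ for a common action is a nice framing the paper leaves implicit, but the substance is identical.
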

\begin{proof}
Let $\mathcal{H}$ be the $\kappa$-dimensional real Hilbert space while $(\bm{a},A) \in \Xi(\kappa)$ and $(\bm{m},M)\in E(\kappa)$.\\
Notice that $(\bm{m},M)^{-1}=(-M^{-1}\bm{m},M^{-1})$. Therefore, \[
(\bm{m},M)^{-1}(\bm{a},A)(\bm{m},M)=(-M^{-1}\bm{m},M^{-1})(\bm{a},A)(\bm{m},M)=(-M^{-1}\bm{m}+M^{-1}\bm{a}+M^{-1}A\bm{m},M^{-1}AM)
\]
Due to this equation where $-M^{-1}\bm{m}+M^{-1}\bm{a}+M^{-1}A\bm{m} \in \mathcal{H}$ thus $-M^{-1}\bm{m}+M^{-1}\bm{a}+M^{-1}A\bm{m} \in \mathrm{T}(\kappa)$,whether $(\bm{m},M)^{-1}(\bm{a},A)(\bm{m},M)$ is in $\Xi(\kappa)$ only depends on whether $M^{-1}AM$ is in $\Theta(\kappa)$. Since $\Theta(\kappa) \trianglelefteq O(\kappa)$, $M^{-1}AM$ is in $\Theta(\kappa)$ thus $(\bm{m},M)^{-1}(\bm{a},A)(\bm{m},M)$ is in $\Xi(\kappa)$. This implies that $\Xi(\kappa)$ is a normal subgroup of $E(\kappa)$, i.e. $\Xi(\kappa) \trianglelefteq E(\kappa)$.
\end{proof}
What's more, as a corollary of Mazur-Ulam theorem, we have :
\begin{corollary}
Let $\mathcal{H}$ be a real Hilbert space whose dimension is $\kappa \in \mathbf{Card}$, then $E(\kappa)$ is isomorphic to its isometry group $\mathrm{ISO}(\mathcal{H})$, i.e. $E(\kappa) \cong \mathrm{ISO}(\mathcal{H})$.
\end{corollary}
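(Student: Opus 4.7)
The plan is to exhibit an explicit group isomorphism $\Phi : E(\kappa) \to \mathrm{ISO}(\mathcal{H})$ given by
\[
\Phi : (\bm{v}, M) \longmapsto \bigl(\bm{x} \mapsto M\bm{x} + \bm{v}\bigr),
\]
and to use the Mazur--Ulam theorem to identify the image as all of $\mathrm{ISO}(\mathcal{H})$. The first step is to check that $\Phi(\bm{v}, M)$ actually lies in $\mathrm{ISO}(\mathcal{H})$: for $M \in O(\kappa) = \Aut{\mathcal{H}}$ we have $\norm{M\bm{x} - M\bm{y}} = \norm{\bm{x} - \bm{y}}$, whence translation by $\bm{v}$ preserves this equality, so $\Phi(\bm{v}, M)$ is a bijective distance-preserving self-map of $\mathcal{H}$.

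Next I would verify that $\Phi$ is a group homomorphism by a direct computation: applying $\Phi(\bm{v}_1, M_1) \circ \Phi(\bm{v}_2, M_2)$ to $\bm{x}$ yields $M_1 M_2 \bm{x} + M_1 \bm{v}_2 + \bm{v}_1$, matching exactly the image of $(\bm{v}_1 + M_1 \bm{v}_2, M_1 M_2) = (\bm{v}_1, M_1) \cdot_{\varphi} (\bm{v}_2, M_2)$ under $\Phi$. Injectivity is immediate: evaluating $\Phi(\bm{v}, M) = \Phi(\bm{v}', M')$ at $\bm{x} = \bm{0}$ forces $\bm{v} = \bm{v}'$, after which $M\bm{x} = M'\bm{x}$ for every $\bm{x}$ gives $M = M'$.

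The substantive step is surjectivity, and this is where I would invoke the Mazur--Ulam theorem: any surjective isometry $f$ of a real normed space is affine. Given $f \in \mathrm{ISO}(\mathcal{H})$, set $\bm{v} = f(\bm{0})$ and $L = f - \bm{v}$; Mazur--Ulam guarantees that $L$ is $\mathbb{R}$-linear, and $L$ is clearly still a bijective isometry. To place $L$ in $O(\kappa)$ I would invoke polarization: $\norm{L\bm{x}} = \norm{\bm{x}}$ together with bilinearity of the inner product forces $\innerproduct{L\bm{x}}{L\bm{y}} = \innerproduct{\bm{x}}{\bm{y}}$, i.e.\ $L^{T} L = I$, and bijectivity upgrades this to $L^{-1} = L^{T}$, which by the earlier theorem characterizing $\Aut{\mathcal{H}}$ places $L$ in $O(\kappa)$. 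Thus $f = \Phi(\bm{v}, L)$ and $\Phi$ is surjective.

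The main obstacle is really only a conceptual one, namely the appeal to Mazur--Ulam to promote a priori non-linear isometries to affine maps; without that theorem one cannot recover the linear part at all. Everything else reduces to a direct verification that the semidirect product operation $\cdot_{\varphi}$ was precisely engineered to match composition of affine maps, so once surjectivity is in hand the isomorphism $E(\kappa) \cong \mathrm{ISO}(\mathcal{H})$ is complete.
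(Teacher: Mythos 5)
Your proof is correct and follows the same route the paper intends: the paper simply asserts the corollary ``as a corollary of Mazur--Ulam'' with no written proof, and your argument supplies exactly the missing details (the explicit map $(\bm{v},M)\mapsto(\bm{x}\mapsto M\bm{x}+\bm{v})$, the homomorphism and injectivity checks, and Mazur--Ulam plus polarization for surjectivity). Nothing to correct.
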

Compared to the finite-dimensional cases where $\mathrm{E}(n)=\mathrm{ISO}(\mathbb{R}^{n})$ , $E(\kappa)$ seems to be a good enough generalization of the Euclidean groups.

\section{Properties of $\Theta(\kappa)$ and $O(\kappa)$}
\subsection{As Topological Groups}
\begin{theorem}
With the topology induced by $\norm{\bullet}_{\mathrm{op}}$, $\Theta(\kappa)$ and $O(\kappa)$ are topological groups.
\end{theorem}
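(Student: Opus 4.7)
The plan is to verify the two defining axioms of a topological group — continuity of multiplication $m\colon(A,B)\mapsto AB$ and continuity of inversion $\iota\colon A\mapsto A^{-1}$ — for the larger group $O(\kappa)$, and then deduce the statement for $\Theta(\kappa)$ automatically from the fact that any subgroup of a topological group, topologised by the restricted metric, is itself a topological group. Since $\Theta(\kappa)\trianglelefteq O(\kappa)$ has already been established, and its topology is induced by the same norm $\norm{\bullet}_{\mathrm{op}}$, this reduction is costless. Throughout I will work inside the ambient normed algebra $\mathscr{B}(\mathcal{H})$, which is a topological algebra in the operator-norm topology.

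The first key observation is that every $A\in O(\kappa)$ satisfies $\norm{A}_{\mathrm{op}}=1$: using $A^{-1}=A^{T}$, we compute $\norm{A\bm{x}}^{2}=\innerproduct{A\bm{x}}{A\bm{x}}=\innerproduct{A^{T}A\bm{x}}{\bm{x}}=\innerproduct{\bm{x}}{\bm{x}}$ for every $\bm{x}\in\mathcal{H}$. Granted this, continuity of $m$ at an arbitrary $(A,B)\in O(\kappa)\times O(\kappa)$ follows from the standard submultiplicative estimate
\[
\norm{A'B'-AB}_{\mathrm{op}}\leq\norm{A'}_{\mathrm{op}}\norm{B'-B}_{\mathrm{op}}+\norm{A'-A}_{\mathrm{op}}\norm{B}_{\mathrm{op}}=\norm{B'-B}_{\mathrm{op}}+\norm{A'-A}_{\mathrm{op}},
\]
so $m$ is in fact $1$-Lipschitz with respect to the max metric on the product.

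For the inversion map I use the identification $A^{-1}=A^{T}$ on $O(\kappa)$ together with the general fact that the Banach adjoint is an isometry of $\mathscr{B}(\mathcal{H})$, that is, $\norm{C^{T}}_{\mathrm{op}}=\norm{C}_{\mathrm{op}}$ for every bounded operator $C$. These two facts combine to give
\[
\norm{A^{-1}-B^{-1}}_{\mathrm{op}}=\norm{A^{T}-B^{T}}_{\mathrm{op}}=\norm{(A-B)^{T}}_{\mathrm{op}}=\norm{A-B}_{\mathrm{op}},
\]
so $\iota$ is itself an isometry, in particular continuous. I do not expect any serious obstacle: the unit-norm property of orthogonal operators and the isometric nature of the transposition are exactly what strip the usual subtleties (which appear, say, for $\mathbf{GL}(\mathcal{H})$) out of the problem. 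The only point worth spelling out carefully is that the subspace topology on $\Theta(\kappa)$ from $\mathscr{B}(\mathcal{H})$ coincides with the one it inherits as a subgroup of $O(\kappa)$, so no separate argument for $\Theta(\kappa)$ is required.
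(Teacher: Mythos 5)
Your proposal is correct, and it is actually more complete than the argument the paper gives. The paper's proof only establishes the submultiplicative inequality $\norm{AB}_{\mathrm{op}}\leq\norm{A}_{\mathrm{op}}\norm{B}_{\mathrm{op}}$ and asserts that this yields continuity of ``the operation''; it never addresses continuity of inversion, which is the second axiom of a topological group and does not follow from submultiplicativity alone (indeed, on all of $\mathscr{B}(\mathcal{H})$ inversion is not even everywhere defined). Your two additions are exactly what close that gap: first, the observation that every element of $O(\kappa)$ is an isometry, so $\norm{A}_{\mathrm{op}}=1$, upgrades the generic estimate $\norm{A'B'-AB}_{\mathrm{op}}\leq\norm{A'}_{\mathrm{op}}\norm{B'-B}_{\mathrm{op}}+\norm{A'-A}_{\mathrm{op}}\norm{B}_{\mathrm{op}}$ to a uniform Lipschitz bound (a small bookkeeping point: with constant $1$ for the sum metric, hence $2$ for the max metric, but either way continuity is immediate); second, the identity $A^{-1}=A^{T}$ on $O(\kappa)$ combined with $\norm{C^{T}}_{\mathrm{op}}=\norm{C}_{\mathrm{op}}$ shows inversion is an isometry of $O(\kappa)$ onto itself. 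Your reduction of the $\Theta(\kappa)$ case to the general fact that a subgroup of a topological group, in the subspace topology, is again a topological group matches the paper's restriction step and is sound, since both topologies are induced by the same norm. In short, you take the same route through operator-norm estimates in $\mathscr{B}(\mathcal{H})$, but you supply the inversion half of the argument that the paper omits.
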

\begin{proof}
For $A,B\in \mathscr{B}(\mathcal{H})$, \[
\norm{AB}_{\mathrm{op}} = \mathop{\sup}_{\norm{\bm{x}}_{\mathcal{H}}=1}\norm{AB\bm{x}}_{\mathcal{H}}\leq\norm{A}_{\mathrm{op}}\norm{B}_{\mathrm{op}}\norm{\bm{x}}_{\mathcal{H}}=\norm{A}_{\mathrm{op}}\norm{B}_{\mathrm{op}}
\]
That is $\norm{AB}_{\mathrm{op}}\leq\norm{A}_{\mathrm{op}}\norm{B}_{\mathrm{op}}$ which implies the continuity of the operation on $\mathscr{B}(\mathcal{H})$. Since $\Theta(\kappa)\trianglelefteq{O(\kappa)}\subset \mathscr{B}(\mathcal{H})$, the operation restricted on the two groups is also continuous, i.e. $\Theta(\kappa)$ and $O(\kappa)$ are topological groups with the topology induced by $\norm{\bullet}_{\mathrm{op}}$.
\end{proof}
\begin{corollary}
$\Theta(\kappa)$ is a topological subgroup of $O(\kappa)$ with the topology induced by $\norm{\bullet}_{\mathrm{op}}$.
\end{corollary}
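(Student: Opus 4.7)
The plan is to assemble this corollary directly from two facts that are already in hand: the algebraic inclusion $\Theta(\kappa) \trianglelefteq O(\kappa)$ proved in Section 3.3, and the continuity of the operator-product on all of $\mathscr{B}(\mathcal{H})$ (hence on its subsets $O(\kappa)$ and $\Theta(\kappa)$) established in the preceding theorem via the submultiplicativity $\norm{AB}_{\mathrm{op}} \leq \norm{A}_{\mathrm{op}}\norm{B}_{\mathrm{op}}$.

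First I would recall that \emph{topological subgroup} means (i) a subgroup, and (ii) with a topology that is exactly the subspace topology coming from the ambient topological group. Clause (i) is immediate from the normality statement $\Theta(\kappa) \trianglelefteq O(\kappa)$, so only clause (ii) needs verification.

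Second, for clause (ii) I would argue that both $\Theta(\kappa)$ and $O(\kappa)$ sit inside the metric space $(\mathscr{B}(\mathcal{H}), d)$ with $d(A,B)=\norm{A-B}_{\mathrm{op}}$, and both are topologised by the restriction of this metric. Because taking subspace topologies of a metric space commutes with restricting the metric in stages, the subspace topology that $\Theta(\kappa)$ inherits from $O(\kappa)$ agrees on the nose with the topology that $\Theta(\kappa)$ carries in its own right (as stated by the previous theorem). Hence the inclusion $\Theta(\kappa) \hookrightarrow O(\kappa)$ is a topological embedding, and continuity of multiplication and inversion on $\Theta(\kappa)$ is simply the restriction of the same continuous operations on $O(\kappa)$ given by the previous theorem.

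I do not expect any genuine obstacle: the statement is essentially a bookkeeping consequence of the two facts cited above. The only point worth underlining in the write-up, to avoid an invisible hand-wave, is the equality of the two candidate topologies on $\Theta(\kappa)$ — the intrinsic one induced by $\norm{\bullet}_{\mathrm{op}}$ and the subspace one inherited from $O(\kappa)$ — and this equality is automatic because both arise from the restriction of one and the same metric on $\mathscr{B}(\mathcal{H})$.
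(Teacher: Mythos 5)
Your proposal is correct and follows essentially the same route the paper intends: the corollary is drawn directly from the normality $\Theta(\kappa)\trianglelefteq O(\kappa)$ together with the continuity of multiplication on $\mathscr{B}(\mathcal{H})$ established in the preceding theorem, with both groups carrying the restriction of the single operator-norm metric. Your explicit remark that the intrinsic and subspace topologies on $\Theta(\kappa)$ coincide is the only point the paper leaves tacit, and it is handled correctly.
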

\subsection{Topological Relationship with Spheres}
About the topology of $\Theta(n)=O(n)=\mathrm{O}(n)$ we have the lemma:
\begin{lemma}
The following graph consists of fibre bundles
\[
\xymatrix{
&S^{0}&S^{1}&S^{2}&S^{3}&\cdots&S^{n}&\cdots\\
0\ar[r]&\mathrm{O}(1)\ar[r]\ar[u]^{\pi_{0}}&\mathrm{O}(2)\ar[r]\ar[u]^{\pi_{1}}&\mathrm{O}(3)\ar[r]\ar[u]^{\pi_{2}}&\mathrm{O}(4)\ar[r]\ar[u]^{\pi_{3}}&\cdots\ar[r]&\mathrm{O}(n+1)\ar[r]\ar[u]^{\pi_{n}}&\cdots
}
\]
where each fibre bundle $(E,B,\pi,F)$ is denoted as
\[
\xymatrix{
&B\\
F\ar[r]&E\ar[u]^{\pi}
}
\]
with $j=0,1,\cdots,n,n+1,\cdots$, and $S^{j}$ the spheres, $F$ the fibre, $E$ the total space, $B$ the base space.
\end{lemma}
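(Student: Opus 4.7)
The plan is to construct each projection $\pi_n : \mathrm{O}(n+1) \to S^n$ explicitly and then establish local triviality by producing continuous sections. The natural choice is $\pi_n(A) = A\bm{e}_0$, evaluating the matrix at a fixed reference unit vector $\bm{e}_0 \in S^n$. Continuity is immediate since this amounts to reading off a fixed column of $A$, and surjectivity follows by completing any $\bm{v} \in S^n$ to an orthonormal basis via Gram--Schmidt. First I would identify the fibre: $\pi_n^{-1}(\bm{e}_0)$ is precisely the stabilizer of $\bm{e}_0$, consisting of block matrices $\mathrm{diag}(1,M)$ with $M \in \mathrm{O}(n)$, which yields a natural embedding $\iota : \mathrm{O}(n) \hookrightarrow \mathrm{O}(n+1)$. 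Every fibre $\pi_n^{-1}(\bm{v})$ is then a left coset of $\iota(\mathrm{O}(n))$ and is thus homeomorphic to $\mathrm{O}(n)$.

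For local triviality I would cover $S^n$ by the two open sets $U_{\pm} = S^n \setminus \{\mp\bm{e}_0\}$. Over $U_+$, each $\bm{v} \in U_+$ determines a canonical rotation $R_{\bm{v}} \in \mathrm{O}(n+1)$ acting in the plane $\mathrm{span}\{\bm{e}_0, \bm{v}\}$ and fixing its orthogonal complement, chosen so that $R_{\bm{v}} \bm{e}_0 = \bm{v}$; written in closed form this yields a continuous section $s_+ : U_+ \to \mathrm{O}(n+1)$ with $\pi_n \circ s_+ = \mathrm{id}$. A parallel construction, obtained by precomposing with an antipodal swap, produces a section $s_-$ on $U_-$. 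The local trivializations
\[
\Phi_{\pm} : U_{\pm} \times \mathrm{O}(n) \to \pi_n^{-1}(U_{\pm}), \qquad (\bm{v}, M) \mapsto s_{\pm}(\bm{v}) \cdot \iota(M),
\]
then admit explicit continuous inverses $A \mapsto \bigl(\pi_n(A),\, \iota^{-1}(s_{\pm}(\pi_n(A))^{-1} A)\bigr)$, which furnishes the fibre bundle structure $\mathrm{O}(n) \to \mathrm{O}(n+1) \stackrel{\pi_n}{\to} S^n$.

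The main obstacle will be choosing the section so that it remains continuous throughout its domain, in particular at $\bm{v} = \bm{e}_0$ where the plane $\mathrm{span}\{\bm{e}_0, \bm{v}\}$ collapses. The cleanest remedy is to build $R_{\bm{v}}$ from the unit vector along $\bm{v} + \bm{e}_0$, whose norm vanishes only at $\bm{v} = -\bm{e}_0$ (which has been excluded from $U_+$), via a Householder-type formula, forcing $R_{\bm{e}_0} = I$ unambiguously. Once such a section is in hand, checking that the transition map $\Phi_-^{-1}\circ\Phi_+$ is valued in $\iota(\mathrm{O}(n))$ is automatic from the coset description of the fibres, and the remaining verifications reduce to routine bookkeeping. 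Since the argument is uniform in $n$, it gives the whole tower of fibre bundles at once.
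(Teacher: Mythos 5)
Your proposal is correct, and it reaches the conclusion by a genuinely different mechanism at the one step that actually matters. You and the paper use the same projection $\pi_{n}(A)=A\bm{e}_{0}$ and both identify the fibre over a point with a coset of the stabilizer $\iota(\mathrm{O}(n))$; the divergence is in how local triviality is certified. The paper chooses, at \emph{each} point $\bm{x}\in S^{j}$ separately, an orthonormal basis $\{\bm{\varepsilon}_{i}\}$ of $(\mathrm{span}\{\bm{x}\})^{\perp}$ and defines the trivialization by expressing $A$ restricted to that complement in the chosen basis; it then verifies that the resulting map is an isometry fibrewise. What it never establishes is that this pointwise choice of frames can be made \emph{continuously} in $\bm{x}$ over the neighbourhood $\mathcal{N}_{\bm{x}}$ --- and that continuity is exactly the content of local triviality (globally such a choice is impossible on $S^{j}$ for most $j$, which is why one must localize at all). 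Your construction supplies precisely this missing ingredient: the Householder-type section $s_{+}(\bm{v})$, built from the unit vector along $\bm{v}+\bm{e}_{0}$ (say as the composition of the reflection fixing $\bm{e}_{0}^{\perp}$ with the reflection fixing $(\bm{v}+\bm{e}_{0})^{\perp}$), is an explicit continuous right inverse of $\pi_{n}$ on $U_{+}$ with $s_{+}(\bm{e}_{0})=I$, and the trivialization $(\bm{v},M)\mapsto s_{+}(\bm{v})\,\iota(M)$ is then a homeomorphism simply because group multiplication and inversion in $\mathrm{O}(n+1)$ are continuous --- no isometry computation is needed, and the transition functions land in the structure group automatically from the coset description. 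What your route buys is rigor at the crux plus a cleaner principal-bundle picture; what the paper's route attempts to buy (an explicit metric statement that its $\varphi$ is an isometry) rests on an unjustified continuous frame selection, so your version is the more defensible one. It is also worth noting that the paper itself uses your two-reflection section idea later, in the infinite-dimensional corollary where it sets $h_{0}(X)=-r(-\bm{e}_{0},\sigma(X))$, so your argument unifies the finite and infinite cases.
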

\begin{proof}
Let $V_{k}(\mathbb{R}^{n})$ denote the Stiefel manifold.\\
First we take an orthonormal basis of $\mathbb{R}^{j+1}$ namely $\{\bm{e}_{i}\}_{i=1}^{j+1}\in{V_{j+1}(\mathbb{R}^{j+1})}$. Identify $\mathbb{R}^{j}$ with $\mathsf{span}(\{\bm{e}_{i}\}_{i=2}^{j+1})$ as following : take an orthonormal basis of $\mathbb{R}^{j}$ anonymously with a bijection $\eta$ from the anonymous basis ($\in{V_{j}(\mathbb{R}^{j})}$) to $\{\bm{e}_{i}\}_{i=2}^{j+1}\in{V_{j}(\mathbb{R}^{j+1})}$ and linearly span the bijection, denoting the resulting isomorphism as $\eta : \mathsf{span}(\{\bm{e}_{i}\}_{i=2}^{j+1}) \to \mathbb{R}^{j}$.\\
Let the continuous surjection $\pi_{j} : \mathrm{O}(j+1) \to S^{j}$ be defined as follows :
\begin{align*}
\pi_{j} : \mathrm{O}(j+1) &\to S^{j}\\
A &\mapsto A\bm{e}_{1}
\end{align*}
We just need to verify for every $\bm{x} \in S^{j}$, there exists a neighbourhood $\mathcal{N}_{\bm{x}}$ of $\bm{x}$ and a homeomorphism $\varphi : \pi^{-1}(\mathcal{N}_{\bm{x}}) \to \mathcal{N}_{\bm{x}}\times\mathrm{O}(j)$ making the following graph commute :
\[
\xymatrix{
\pi^{-1}_{j}(\mathcal{N}_{\bm{x}})\ar[d]^{\pi_{j}}\ar@{..>}[r]^{\varphi}&\mathcal{N}_{\bm{x}}\times\mathrm{O}(j)\ar[dl]^{\text{proj}_{1}}\\
\mathcal{N}_{\bm{x}}
}
\]
where $\text{proj}_{1} : \mathcal{N}_{\bm{x}}\times\mathrm{O}(j) \to \mathcal{N}_{\bm{x}}$ is the canonical projection to the first factor.\\
For every $A,B\in\mathrm{O}(j+1)$ which satisfy $A\bm{e}_{1}=B\bm{e}_{1}$, we have :
\[
\mathsf{span}(\{A\bm{e}_{i}\}_{i=2}^{j+1})=(\mathsf{span}(\{A\bm{e}_{1}\}))^{\perp}=(\mathsf{span}(\{B\bm{e}_{1}\}))^{\perp}=\mathsf{span}(\{B\bm{e}_{i}\}_{i=2}^{j+1})
\]
In other words, for every $\bm{x}\in{S^{j}}$, a $j$-dimensional subspace is associated. At every point $\bm{x}\in{S^{j}}$, take an orthonormal basis of the subspace $(\mathsf{span}(\{\bm{x}\}))^{\perp}=(\mathsf{span}(\{A\bm{e}_{1}\}))^{\perp}$ namely $\{\bm{\varepsilon}_{i}\}_{i=1}^{j}\in{V_{j}(\mathbb{R}^{j+1})}$. Recall our identification for $\mathsf{span}(\{\bm{e}_{i}\}_{i=2}^{j+1})$ and $\mathbb{R}^{j}$. Consider the bijection :
\begin{align*}
f : \{\bm{\varepsilon}_{i}\}_{i=1}^{j} &\to \left\{\eta(\bm{e}_{i})\right\}_{i=2}^{j+1}\in{V_{j}(\mathbb{R}^{j})} \\
\bm{\varepsilon}_{i} &\mapsto \eta(\bm{e}_{i+1})
\end{align*}
Linearly span the bijection $f$ to the isomorphism (and thus a homeomorphism) $\hat{f} : \mathsf{span}(\{\bm{\varepsilon}_{i}\}_{i=1}^{j}) \to \mathbb{R}^{j}$.
Therefore for every $A\in\pi^{-1}(\bm{x})$, $\left\{\hat{f}(A\bm{\varepsilon}_{i})\right\}_{i=2}^{j+1}\in{V_{j}(\mathbb{R}^{j})}$ is an orthonormal basis of $\mathbb{R}^{j}$ while $\left\{\hat{f}(\bm{\varepsilon}_{i})\right\}_{i=2}^{j+1}\in{V_{j}(\mathbb{R}^{j})}$ is also an orthonormal basis of $\mathbb{R}^{j}$.\\
According to the fact that $A$ can be written as the following form uniquely with $\{\bm{e}_{i}\}_{i=1}^{j+1}$ given :
\begin{align*}
A : \mathbb{R}^{j+1} &\to \mathbb{R}^{j+1}\\
 \bm{x} &\mapsto \sum_{i=1}^{j+1}\innerproduct{\bm{x}}{\bm{e}_{i}}(A\bm{e}_{i})
\end{align*}
For every point $\bm{x}\in{S^{j}}$, we can define this mapping :
\begin{align*}
\phi_{\bm{x}} : \pi^{-1}(\bm{x}) &\to \mathrm{O}(j) \\
(A:\bm{x} \mapsto \sum_{i=1}^{j+1}\innerproduct{\bm{x}}{\bm{e}_{i}}(A\bm{e}_{i})) &\mapsto (\phi_{\bm{x}}(A):\bm{x} \mapsto \sum_{i=1}^{j}\innerproduct{\bm{x}}{\hat{f}(\bm{\varepsilon}_{i})}(\hat{f}(A\bm{\varepsilon}_{i})))
\end{align*}
where $\{\bm{\varepsilon}_{i}\}_{i=1}^{j}$ is the orthonormal basis we have taken above for every $\bm{x}\in{S^{j}}$.\\
Now we can give out the homeomorphism $\varphi : \pi^{-1}(\mathcal{N}_{\bm{x}}) \to \mathcal{N}_{\bm{x}}\times\mathrm{O}(j)$ :
\begin{align*}
\varphi : \pi^{-1}(\mathcal{N}_{\bm{x}}) &\to \mathcal{N}_{\bm{x}}\times\mathrm{O}(j) \\
A &\mapsto (A\bm{e}_{1},\phi_{A\bm{e}_{1}}(A))
\end{align*}
Let's prove that $\varphi$ is a homeomorphism : \\
\indent It's obvious that $\varphi$ is a surjection. Consider $\pi^{-1}({\bm{x}})/(\text{proj}_{2}\circ\varphi)$, in each equivalent class $[A]$ for any element $A\in[A]$, $\{A\bm{e}_{i}\}_{i=2}^{j+1}$ identifies while $A\bm{e}_{1}$ differs, i.e. $\varphi$ is injective. That is, $\varphi$ is a bijection.\\
\indent The `evaluation' $\mathrm{O}(j+1)\to\mathbb{R}^{j+1};A\mapsto{A\bm{e}_{1}}$ is continuous, i.e. $\varphi$ is continuous for the first component.\\
\indent $\forall A_{1},A_{2} \in \pi^{-1}(\bm{x})$ , notice that $A_{1}\bm{e}_{1}=A_{2}\bm{e}_{1}=\bm{x}$. Consider $\tilde{A_{1}}=\phi_{\bm{x}}(A_{1})$ and $\tilde{A_{2}}=\phi_{\bm{x}}(A_{2})$ , which have the property :
\begin{align*}
\tilde{A}\hat{f}(\bm{\varepsilon}_{i})=\phi_{\bm{x}}(A)\hat{f}(\bm{\varepsilon}_{i})=\innerproduct{\hat{f}(\bm{\varepsilon}_{i})}{\hat{f}(\bm{\varepsilon}_{i})}(\hat{f}(A\bm{\varepsilon}_{i})))=\hat{f}(A\bm{\varepsilon}_{i})\;\;\;(i=1,2,\cdots,j\,,\,\forall{A\in\pi^{-1}(\bm{x})})
\end{align*}
\indent To compute the distance between $A_{1}$ and $A_{2}$, we have :
\begin{align*}
d(A_{1},A_{2})&=\norm{A_{1}-A_{2}}_{\text{op}}=\inf_{\bm{v}\in S^{j}}\norm{A_{1}\bm{v}-A_{2}\bm{v}}_{\mathbb{R}^{j+1}}\\
&=\inf_{\bm{v}\in S^{j}}\sqrt{\sum_{i=1}^{j+1}\innerproduct{(A_{1}-A_{2})\bm{v}}{\bm{e}_{i}}^{2}}=\inf_{\bm{v}\in S^{j}}\sqrt{0+\sum_{i=2}^{j+1}\innerproduct{(A_{1}-A_{2})\bm{v}}{\bm{e}_{i}}^{2}}\\
&=\inf_{\bm{v}\in S^{j}}\sqrt{\sum_{i=2}^{j+1}\innerproduct{\bm{v}}{\bm{e}_{i}}^{2}(\innerproduct{A_{1}\bm{e}_{i}}{\bm{e}_{i}}-\innerproduct{A_{2}\bm{e}_{i}}{\bm{e}_{i}})^{2}}
\end{align*}
\indent while for $\tilde{A_{1}},\tilde{A}_{2}$ we have :
\begin{align*}
d(\tilde{A_{1}},\tilde{A_{2}})&=\inf_{\bm{v}\in S^{j-1}}\sqrt{\sum_{i=1}^{j}\innerproduct{(\tilde{A_{1}}-\tilde{A_{2}})\bm{v}}{\hat{f}(\bm{\varepsilon}_{i})}^{2}}\\
&=\inf_{\bm{v}\in S^{j-1}}\sqrt{\sum_{i=1}^{j}\innerproduct{\bm{v}}{\hat{f}(\bm{\varepsilon}_{i})}^{2}(\innerproduct{\tilde{A_{1}}\hat{f}(\bm{\varepsilon}_{i})}{\hat{f}(\bm{\varepsilon}_{i})}-\innerproduct{\tilde{A_{2}}\hat{f}(\bm{\varepsilon}_{i})}{\hat{f}(\bm{\varepsilon}_{i})})^{2}}\\
&=\inf_{\bm{v}\in S^{j-1}}\sqrt{\sum_{i=1}^{j}\innerproduct{\bm{v}}{\hat{f}(\bm{\varepsilon}_{i})}^{2}(\innerproduct{\hat{f}(A_{1}\bm{\varepsilon}_{i})}{\hat{f}(\bm{\varepsilon}_{i})}-\innerproduct{\hat{f}(A_{2}\bm{\varepsilon}_{i})}{\hat{f}(\bm{\varepsilon}_{i})})^{2}}
\end{align*}
\indent In other words, $d(A_{1},A_{2})=d(\tilde{A_{1}},\tilde{A_{2}})$. Therefore, for every $\varepsilon>0$, we have $\delta=\varepsilon>0$ satisfying that $d(A_{1},A_{2})<\delta \Rightarrow d(\phi_{\bm{x}}(A_{1}),\phi_{\bm{x}}(A_{2}))<\varepsilon,\forall{A_{1},A_{2}}\in\pi^{-1}(\bm{x})$ , i.e. $\phi_{\bm{x}}$ is continuous when restricted on $\pi^{-1}(\bm{x})$.\\
\indent Now for every $A,B\in\pi^{-1}(\mathcal{N}_{\bm{x}})$, by the fact that $\varphi$ is surjective, there exists $C\in\pi^{-1}(\mathcal{N}_{\bm{x}})$ satisfying the following two requirements:\\
\indent \indent 1. $\phi_{C\bm{e}_{1}}(C)=\phi_{B\bm{e}_{1}}(B)$, i.e. $\varphi(C)$ and $\varphi(B)$ only differs in the first component.\\
\indent \indent 2. $C\bm{e}_{1}=A\bm{e}_{1}$, i.e. $C\in\pi^{-1}(A\bm{e}_{1})$\\
\indent By the first requirement,
\begin{align*}
d(C,B)&=\inf_{\bm{v}\in S^{j+1}}\sqrt{\innerproduct{\bm{v}}{\bm{e}_{1}}^{2}(\innerproduct{C\bm{e}_{1}}{\bm{e}_{1}}-\innerproduct{B\bm{e}_{1}}{\bm{e}_{1}})^{2}}\\
&=\sqrt{(\innerproduct{C\bm{e}_{1}}{\bm{e}_{1}}-\innerproduct{B\bm{e}_{1}}{\bm{e}_{1}})^{2}}\\
&=\norm{(C-B)\bm{e}_{1}}=\norm{\pi(C)-\pi(B)}\\
&=\norm{\varphi(C)-\varphi(B)} \;\;\;\;\textit{(from\,the\,vanished\,difference\,of\,the\,second\,component)}\\
&=d(\varphi(C),\varphi(B))
\end{align*}
\indent By the second requirement, $d(\varphi(A),\varphi(C))=d(A,C)$ as we've computed.\\
\indent With the induced metric on the product space $\mathcal{N}_{\bm{x}} \times \mathrm{O}(j)$, it computes
\begin{align*}
d(\varphi(A),\varphi(B))&=\sqrt{d^{2}(\varphi(A),\varphi(C))+d^{2}(\varphi(C),\varphi(B))}\\
&=\sqrt{d^{2}(A,C)+d^{2}(C,B)} \;\;\;\;\textit{(substitution\,of\,equal\,quantity)}\\
&=d(A,B)
\end{align*}
\indent Thus $\varphi$ is an isometry, i.e. the homeomorphism we desire.

\end{proof}
However the infinite cases are not so simple. The following is a natural generalization of spheres :
\begin{defi}(\textbf{Cardinal-Dimensional Sphere : }$S^{\kappa}$)\\
\indent Let $\kappa \in \mathbf{Card}$ and $\mathcal{H}$ be a real Hilbert space with $\dim \mathcal{H} = |\kappa + 1| \in \mathbf{Card}$. Then $S^{\kappa}$ is defined as the unit sphere in $H$, i.e.
\[
S^{\kappa}=\{\bm{x} \in \mathcal{H} : \norm{\bm{x}}=1\}
\]
\end{defi}
Notice that $\mathbb{R}^{n+1}$ is also a finite-dimensional real Hilbert space, then for finite dimension the definition above gives out exactly $S^{n}$ which shows that the definition above generalizes the finite-dimensional cases. Similarly we define the $\kappa$-dimensional open disc $D^{\kappa}$ and the closed one $\overline{D^{\kappa}}$.\\
To have a further discussion, the following definition and lemmas are necessary.
\begin{defi}(\textbf{$\lambda$-Shift Operator} $\mathscr{S}_{\{\bm{e}_{\alpha}\},\lambda}$):
Let $\mathcal{H}$ be a $\kappa$-dimensional real Hilbert space with an orthonormal basis $\{\bm{e}_{\alpha}\}_{\alpha=0}^{\kappa}$. For $\lambda\in\mathbb{N}$ the \textbf{$\lambda$-shift operator $\mathscr{S}_{\{\bm{e}_{\alpha}\},\lambda}$} is defined as
\begin{align*}
\mathscr{S}_{\{\bm{e}_{\alpha}\},\lambda} : \mathcal{H} &\rightarrow \mathcal{H}\\
\bm{x} &\mapsto \sum_{\alpha=0}^{\kappa}\innerproduct{\bm{x}}{\bm{e}_{\alpha}}\bm{e}_{\alpha+\lambda}
\end{align*}
Particularly, for $\lambda=1$ and $\{\bm{e}_{\alpha}\}$ clearly selected, the $1$-shift operator $\mathscr{S}_{\{\bm{e}_{\alpha}\},1}$ is denoted as the \textbf{shift operator $\mathscr{S}$} which has the form:
\begin{align*}
\mathscr{S} : \mathcal{H} &\rightarrow \mathcal{H}\\
\bm{x} &\mapsto \sum_{i=0}^{\infty}\innerproduct{\bm{x}}{\bm{e}_{i}}\bm{e}_{i+1}+\sum_{\alpha=\omega}^{\kappa}\innerproduct{\bm{x}}{\bm{e}_{\alpha}}\bm{e}_{\alpha}
\end{align*}
\end{defi}
Next lemma gives out the reason why we can't simply generalize the above lemma about fibration by just repeating the steps:
\begin{lemma}
For cardinal number $\kappa \geq \omega$, $S^{\kappa}$ is contractible.
\end{lemma}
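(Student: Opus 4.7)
The plan is to construct an explicit contraction of $S^\kappa$ to the point $\bm{e}_0$ using the shift operator $\mathscr{S}$ from the previous definition, via two normalized straight-line homotopies spliced together. The geometric point is that because $\kappa \geq \omega$, the countable shift $\mathscr{S}$ is a linear isometric embedding of $\mathcal{H}$ into itself whose image is contained in the hyperplane $\{\bm{e}_0\}^{\perp}$; this is exactly the mechanism that is unavailable in finite dimensions and that will power the contraction.

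First I would verify that $\mathscr{S}$ is linear and norm-preserving (immediate from its definition applied to the Parseval expansion of $\bm{x}$) and that $\innerproduct{\mathscr{S}\bm{x}}{\bm{e}_0} = 0$ for every $\bm{x} \in \mathcal{H}$, since $\bm{e}_0$ appears neither among the shifted targets $\{\bm{e}_{i+1}\}_{i \geq 0}$ nor among the fixed basis vectors $\{\bm{e}_\alpha\}_{\alpha \geq \omega}$. Then I would introduce the homotopy
\[
H_1(\bm{x},t) = \frac{(1-t)\bm{x} + t\,\mathscr{S}\bm{x}}{\norm{(1-t)\bm{x} + t\,\mathscr{S}\bm{x}}}, \qquad (\bm{x},t) \in S^\kappa \times [0,1],
\]
from $\mathrm{id}_{S^\kappa}$ to $\mathscr{S}|_{S^\kappa}$. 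The nontrivial point is showing that the denominator never vanishes: if it did for some $t \in (0,1]$, then $\mathscr{S}\bm{x} = -\tfrac{1-t}{t}\bm{x}$, making $\bm{x}$ a real eigenvector of the isometry $\mathscr{S}$ with a non-positive real eigenvalue $\mu$; since a real linear isometry forces $|\mu| = 1$, only $\mu = -1$ is possible, and then matching coefficients against the basis yields $c_0 = 0$, $c_{i+1} = -c_i$ on the shifted block, and $c_\alpha = -c_\alpha$ on the fixed block, forcing $\bm{x} = \bm{0}$, contradicting $\bm{x} \in S^\kappa$.

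Next I would define the second normalized straight-line homotopy
\[
H_2(\bm{x},t) = \frac{(1-t)\,\mathscr{S}\bm{x} + t\,\bm{e}_0}{\norm{(1-t)\,\mathscr{S}\bm{x} + t\,\bm{e}_0}},
\]
going from $\mathscr{S}|_{S^\kappa}$ to the constant map $\bm{e}_0$. Since $\mathscr{S}\bm{x} \perp \bm{e}_0$, Pythagoras gives $\norm{(1-t)\,\mathscr{S}\bm{x} + t\,\bm{e}_0}^2 = (1-t)^2 + t^2 \geq \tfrac{1}{2}$, so $H_2$ is automatically continuous and well-defined on $S^\kappa \times [0,1]$. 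Splicing $H_1$ on $[0,\tfrac{1}{2}]$ with $H_2$ on $[\tfrac{1}{2},1]$ in the usual way yields a continuous null-homotopy of $\mathrm{id}_{S^\kappa}$, as desired.

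The main obstacle, and indeed the only place infinite-dimensionality truly enters, is the eigenvalue argument ruling out the vanishing of the denominator of $H_1$ at $t = \tfrac{1}{2}$: it uses both that $\mathscr{S}$ is an honest isometric embedding (not a surjection, as would be forced in finite dimensions) and that $\bm{e}_0$ is absent from its image. Once this is in hand, the two normalized straight-line homotopies compose without further subtlety, and this also clarifies why the preceding fibration lemma $\mathrm{O}(n) \to \mathrm{O}(n+1) \to S^n$ cannot be extended by a naive copy of the finite-dimensional argument, since the candidate base $S^\kappa$ has become contractible.
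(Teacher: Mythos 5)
Your proof is correct and follows essentially the same route as the paper: the same two-stage contraction through the shift operator $\mathscr{S}$, first deforming $\mathrm{id}_{S^{\kappa}}$ to $\mathscr{S}|_{S^{\kappa}}$ along normalized segments and then to the constant map $\bm{e}_{0}$ using $\mathscr{S}\bm{x}\perp\bm{e}_{0}$. Your eigenvalue argument for the non-vanishing of the denominator (isometry forces $|\mu|=1$, hence $\mu=-1$, hence all coefficients vanish) is in fact a cleaner version of the paper's coefficient-recursion argument, and your normalized straight-line form of the second homotopy is an equally valid substitute for the paper's $t\bm{e}_{0}+\sqrt{1-t^{2}}\,\mathscr{S}\bm{x}$.
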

\begin{proof}
Treat $S^{\kappa}$ as a embed subset of the real Hilbert space $\mathcal{H}$ with an orthonormal basis $\{\bm{e_{\alpha}}\}_{\alpha\in\mathfrak{A}}$. For each point $\bm{x} \in S^{\kappa}$, $\bm{x}$ has the form $\sum_{\alpha\in\mathfrak{A}}\innerproduct{\bm{x}}{\bm{e}_{\alpha}}\bm{e}_{\alpha}$. It's direct to see $\innerproduct{\mathscr{S}\bm{x}}{\bm{e}_{0}}=0$, where $\mathscr{S}$ is the shift operator.\\
Here we construct a homotopy between the identity map $I$ of $S^{\kappa}$ and the shift operator $\mathscr{S}$ restricted on $S^{\kappa}$:\begin{align*}
F_{1}(t,\bm{x}) : [0,1] &\times S^{\kappa} \rightarrow S^{\kappa} \\
(t,\bm{x}) &\mapsto \frac{t \mathscr{S}\bm{x} + (1-t) \bm{x}}{\norm{t \mathscr{S}\bm{x} + (1-t)\bm{x}}}
\end{align*}
It's necessary to show that $\norm{t \mathscr{S}\bm{x} + (1-t) \bm{x}}\neq{0}$. First, for the cases $t=0$ the mapping degenerates to $I$, therefore in the following we assume that $t$ is non-zero. Besides, if $\innerproduct{\bm{x}}{\bm{e}_{0}}\neq{0}$ then $t \mathscr{S}\bm{x} + (1-t) \bm{x}$ can't be the zero vector $\bm{0}$ thus whose norm can't be $0$. Therefore we can assume that $\innerproduct{\bm{x}}{\bm{e}_{0}}=0$. For the same reason we assume that $\sum_{\alpha=\omega}^{\kappa}\innerproduct{\bm{x}}{\bm{e}_{\alpha}}=\bm{0}$. Then via calculation under the assumptions, $\innerproduct{t \mathscr{S}\bm{x} + (1-t) \bm{x}}{\bm{e}_{i}}=t\innerproduct{\bm{x}}{\bm{e}_{i+1}}+(1-t)\innerproduct{\bm{x}}{\bm{e}_{i}},i\in\mathbb{N}$. 
The value above be $0$ if and only if $\innerproduct{\bm{x}}{\bm{e}_{i+1}}=\frac{t-1}{t}\innerproduct{\bm{x}}{\bm{e}_{i}},i\in\mathbb{N}$. Thus under assumption $\norm{\mathscr{S}\bm{x}}=\frac{t-1}{t}\norm{\bm{x}}$ which contradicts to the fact that $\norm{\mathscr{S}\bm{x}}=\norm{\bm{x}}=1$. In brief, $\norm{t \mathscr{S}\bm{x} + (1-t) \bm{x}}\neq{0}$.\\
In addition, take $\bm{x}_{0}=\bm{e}_{0} \in S^{\kappa}$ as the base point, the following is a homotopy between the mentioned shift operator $\mathscr{S}$ and a constant mapping $c:S^{\kappa}\rightarrow\{\bm{e}_{0}\}$: \begin{align*}
F_{2}(t,\bm{x}) : [0,1] \times S^{\kappa} &\rightarrow S^{\kappa} \\
(t,\bm{x}) &\mapsto t \bm{e}_{0} + \sqrt{1-t^{2}}\mathscr{S}\bm{x}
\end{align*}
To show that the value of $F_{2}(t,\bm{x})$ is in $S^{\kappa}$, we only need to verify the norm. Notice that due to the shift operator $\mathscr{S}$ we have $\innerproduct{\mathscr{S}\bm{x}}{\bm{e}_{0}}=0$, therefore $\norm{t \bm{e}_{0} + \sqrt{1-t^{2}}\mathscr{S}\bm{x}}=\sqrt{t^{2}+(\sqrt{1-t^{2}})^{2}}=1$ as what we desire.\\
The two homotopy $F_{1}(\bm{x},t)$ and $F_{2}(\bm{x},t)$ together implies that the identity map of $S^{\kappa}$ namely $I$ is homotopic to a constant mapping $c:S^{\kappa}\rightarrow\{\bm{e}_{0}\}$, i.e. $S^{\kappa}$ is contractible.
\end{proof}
Therefore, since to have a homeomorphism it's necessary to have homotopy equivalence, such structures no longer survives. However, some of the following theorems might help in further discussions (than all what we'll give out in this paper).

\begin{lemma}
The $\kappa$-sphere $S^{\kappa} (\kappa\geq\aleph_{0})$ is a topological manifold based on Hilbert space with the subspace topology induced from the norm topology of the $\kappa$-dimensional real Hilbert space $\mathcal{H}$.
\end{lemma}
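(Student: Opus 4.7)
The plan is to construct an explicit stereographic chart at every point of $S^{\kappa}$ and to exhibit it as a homeomorphism onto an open subset of a Hilbert space. Fix $\bm{p}\in S^{\kappa}$. The orthogonal complement $\bm{p}^{\perp}=\{\bm{y}\in\mathcal{H}:\innerproduct{\bm{y}}{\bm{p}}=0\}$ is a closed linear subspace of $\mathcal{H}$, hence itself a Hilbert space; since $\kappa\geq\aleph_{0}$ we have $\dim\mathcal{H}=|\kappa+1|=\kappa$, and deleting $\bm{p}$ from an orthonormal basis containing it yields an orthonormal basis of $\bm{p}^{\perp}$ still of cardinality $\kappa$. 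So $\bm{p}^{\perp}$ is the natural model Hilbert space against which the chart at $\bm{p}$ will be compared.

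Next I would define the stereographic projection from the antipode $-\bm{p}$,
\[
\phi_{\bm{p}}:S^{\kappa}\setminus\{-\bm{p}\}\longrightarrow \bm{p}^{\perp},\qquad \phi_{\bm{p}}(\bm{x})=\frac{\bm{x}-\innerproduct{\bm{x}}{\bm{p}}\bm{p}}{1+\innerproduct{\bm{x}}{\bm{p}}},
\]
which geometrically is the unique point at which the affine line through $-\bm{p}$ and $\bm{x}$ meets $\bm{p}^{\perp}$. The denominator vanishes only at $\bm{x}=-\bm{p}$, so $\phi_{\bm{p}}$ is well defined, and $\innerproduct{\phi_{\bm{p}}(\bm{x})}{\bm{p}}=0$ is immediate. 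Solving the intersection equation $\norm{-\bm{p}+s(\bm{y}+\bm{p})}=1$ for a given $\bm{y}\in\bm{p}^{\perp}$ gives the candidate inverse
\[
\phi_{\bm{p}}^{-1}:\bm{p}^{\perp}\longrightarrow S^{\kappa}\setminus\{-\bm{p}\},\qquad \phi_{\bm{p}}^{-1}(\bm{y})=\frac{2\bm{y}+(1-\norm{\bm{y}}^{2})\bm{p}}{1+\norm{\bm{y}}^{2}},
\]
and a direct substitution confirms $\phi_{\bm{p}}\circ\phi_{\bm{p}}^{-1}=\mathrm{id}_{\bm{p}^{\perp}}$ and $\phi_{\bm{p}}^{-1}\circ\phi_{\bm{p}}=\mathrm{id}_{S^{\kappa}\setminus\{-\bm{p}\}}$.

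To finish, I would observe that $\phi_{\bm{p}}$ and $\phi_{\bm{p}}^{-1}$ are assembled from the continuous operations of addition, scalar multiplication, the continuous linear functional $\bm{x}\mapsto\innerproduct{\bm{x}}{\bm{p}}$, the continuous scalar $\bm{y}\mapsto\norm{\bm{y}}^{2}$, and division by a nonvanishing continuous denominator, so both are continuous in the norm topology. Hence $\phi_{\bm{p}}$ is a homeomorphism from the open set $S^{\kappa}\setminus\{-\bm{p}\}$ (open because $\{-\bm{p}\}$ is closed in the Hausdorff subspace topology) onto the full Hilbert space $\bm{p}^{\perp}$. Since any two such charts with distinct base points already cover $S^{\kappa}$, every point of $S^{\kappa}$ has a neighbourhood homeomorphic to an open subset of a Hilbert space, so $S^{\kappa}$ is a topological Hilbert manifold.

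The only conceptual step that requires any real attention is matching the sizes: the cardinal identity $|\kappa+1|=\kappa$ for $\kappa\geq\aleph_{0}$ is what guarantees that the model space $\bm{p}^{\perp}$ has the "correct" Hilbert dimension $\kappa$. Beyond this remark the argument copies the classical stereographic proof for finite-dimensional spheres, and no genuinely new analytic difficulty arises because orthogonal projection and the squared norm are continuous in every Hilbert space; the main place I would slow down is the algebraic verification $\phi_{\bm{p}}^{-1}\circ\phi_{\bm{p}}=\mathrm{id}$, which is pure calculation but the easiest place to introduce a sign or denominator error.
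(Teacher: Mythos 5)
Your proof is correct, and it takes a genuinely different route from the paper's. The paper charts $S^{\kappa}$ by open hemispheres: it fixes an orthonormal basis $\{\bm{e}_{\alpha}\}$, takes $(S^{\kappa})^{+}=\{\bm{x}\in S^{\kappa}:\innerproduct{\bm{x}}{\bm{e}_{0}}>0\}$, and uses the orthogonal projection $\bm{x}\mapsto\bm{x}-\innerproduct{\bm{x}}{\bm{e}_{0}}\bm{e}_{0}$ onto the open disc $D^{\kappa}$ in the hyperplane $\bm{e}_{0}^{\perp}$, with inverse $\bm{y}\mapsto\bm{y}+\sqrt{1-\norm{\bm{y}}^{2}}\,\bm{e}_{0}$; it then rotates the basis to put a hemisphere around each point. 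You instead use stereographic projection from the antipode, and your formulas for $\phi_{\bm{p}}$ and $\phi_{\bm{p}}^{-1}$ check out (the intersection parameter is $t=1/(1+\innerproduct{\bm{x}}{\bm{p}})$, the norm of $\phi_{\bm{p}}^{-1}(\bm{y})$ is indeed $1$, and the compositions are the identities). Your version buys an atlas of just two charts, each of which is a homeomorphism onto the \emph{entire} model Hilbert space $\bm{p}^{\perp}$ rather than onto a disc, and it sidesteps two points the paper handles somewhat loosely: the openness of the chart domain (a one-point complement is manifestly open, whereas the paper only gestures at an ``open-ball-covering'' argument for $(S^{\kappa})^{+}$) and the continuity of the inverse chart (your inverse is a composition of globally continuous operations, whereas the paper's $\varepsilon$--$\delta$ argument for $P^{-1}$ as written actually re-proves continuity of $P$). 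The hemisphere approach, on the other hand, feeds directly into the paper's subsequent corollary on Fr\'{e}chet differentiability, since the chart map there is linear and its derivative is itself; your stereographic charts would serve equally well for that purpose but would require a short extra computation of the transition maps' derivatives. You are also right to single out the cardinal identity $|\kappa+1|=\kappa$ for $\kappa\geq\aleph_{0}$ as the one point where the infinite-dimensional setting genuinely matters for identifying the model space.
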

\begin{proof}
For convenience we take a part of $S^{\kappa}$ via an orthonormal basis $\{\bm{e}_{\alpha}\}_{\alpha=0}^{\kappa}$ as the set : \[(S^{\kappa})^{+}=\{\bm{x}\in S^{\kappa} : \innerproduct{\bm{x}}{\bm{e}_{0}}>0\}\]
Since via basis transformation the whole $S^{\kappa}$ can be covered, the choice of the basis doesn't matter a lot.
Take a subspace of $\mathcal{H}$ as follows:\[\mathscr{S}\mathcal{H}=\{\bm{x}\in \mathcal{H} : \innerproduct{\bm{x}}{\bm{e}_{0}}=0\}=(\mathsf{span}(\{\bm{e}_{0}\}))^{\perp}\]
Define the projection $P$ as follows:
\begin{align*}
P : (S^{\kappa})^{+} &\rightarrow D^{\kappa} \subset \mathscr{S}\mathcal{H} \\
\bm{x} &\mapsto \bm{x}-\innerproduct{\bm{x}}{\bm{e}_{0}}\bm{e}_{0}
\end{align*}
Obviously $P^{2}=P$ and $P$ is a bijection whose inverse is :
\begin{align*}
P^{-1} : & D^{\kappa} \rightarrow (S^{\kappa})^{+} \\
\bm{y} &\mapsto \bm{y}+\sqrt{1-\norm{\bm{y}}^{2}}\bm{e}_{0}
\end{align*}
Notice that by open-ball-covering, $(S^{\kappa})^{+}$ forms a open subset of $S^{\kappa}$ while the image of $P$ is the open disc in $\mathscr{S}\mathcal{H}$ namely $\{\bm{x} \in \mathscr{S}\mathcal{H} : \norm{\bm{x}}<1\}=D^{\kappa}$. In other words, the domain and range of $P$ are open.\\
Since $P$ is a projection, we have $\norm{P\bm{x}}\leq\norm{\bm{x}}$, for $H$ is a metric space thus $P$ is continuous.\\
To show that $P^{-1}$ is continuous, we only need verify $P^{-1}$ is continuous on every point in its domain $P(S^{\kappa})^{+}$. Equivalently speaking, for $\bm{y}_{0}\in P(S^{\kappa})^{+}$, for any positive real number $\varepsilon$ , $\norm{P^{-1}\bm{y}_{0}-P^{-1}\bm{y}}<\varepsilon$ implies that there exists real positive number $\delta$ satisfies $\norm{\bm{y}_{0}-\bm{y}}<\delta$ :
\[
\norm{\bm{y}_{0}-\bm{y}}=\norm{P\bm{x}_{0}-P\bm{x}}=\norm{P(\bm{x}_{0}-\bm{x})}\leq\norm{\bm{x}_{0}-\bm{x}}<\varepsilon
\]
in which expression let $\delta$ be $\delta=\varepsilon$ satisfies the requirement.\\
Therefore, $P$ is a continuous bijection whose inverse $P^{-1}$ is also continuous, i.e. $P$ is a homeomorphism between $(S^{\kappa})^{+}$ and an connected open subset of $H$. Recall that we've been using the basis $\{\bm{e}_{\alpha}\}_{\alpha=0}^{\kappa}$. For any point $\bm{x}'$ in $S^{\kappa}$, use such a basis containing $\bm{x}'$  thus $(S^{\kappa})^{+}$ be a neighbourhood of $\bm{x}'$ which shows that $S^{\kappa}$ is a topological manifold.
\end{proof}
\begin{corollary}
$S^{\kappa}\,(\kappa\geq 1)$ is a product topological manifold.
\end{corollary}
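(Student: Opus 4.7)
The plan is to leverage the chart constructed in the preceding lemma and observe that its model space carries an obvious product decomposition. Recall that, for any $\bm{x}'\in S^{\kappa}$, that lemma furnishes a neighborhood $(S^{\kappa})^{+}$ of $\bm{x}'$ together with a homeomorphism $P^{-1}:D^{\kappa}\to (S^{\kappa})^{+}$, where $D^{\kappa}$ is the open unit disc inside the Hilbert space $\mathscr{S}\mathcal{H}=(\mathsf{span}(\{\bm{e}_{0}\}))^{\perp}$.

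First I would observe that the hypothesis $\kappa\geq 1$ forces $\dim\mathscr{S}\mathcal{H}\geq 1$, so we may select an additional unit vector $\bm{e}_{1}\in\mathscr{S}\mathcal{H}$ and write the orthogonal splitting
\[
\mathscr{S}\mathcal{H}=\mathbb{R}\bm{e}_{1}\oplus(\mathsf{span}(\{\bm{e}_{0},\bm{e}_{1}\}))^{\perp},
\]
which, as topological vector spaces, realizes $\mathscr{S}\mathcal{H}\cong\mathbb{R}\times\mathcal{H}''$, a genuine Cartesian product of topological manifolds.

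Next, since $D^{\kappa}$ is a convex open neighborhood of the origin in $\mathscr{S}\mathcal{H}$, the radial homeomorphism $\bm{y}\mapsto\bm{y}/(1-\norm{\bm{y}})$ (with inverse $\bm{z}\mapsto\bm{z}/(1+\norm{\bm{z}})$) identifies $D^{\kappa}$ with the whole of $\mathscr{S}\mathcal{H}$, and hence with $\mathbb{R}\times\mathcal{H}''$. Composing with $P^{-1}$ makes each chart $(S^{\kappa})^{+}$ homeomorphic to $\mathbb{R}\times\mathcal{H}''$, and varying the orthonormal basis as in the preceding lemma covers $S^{\kappa}$ by such charts, thereby exhibiting it as a topological manifold modeled on a product Hilbert space.

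The only delicate point is the compatibility of the product splittings across chart overlaps, but since any two such decompositions are related by the restriction of a linear isomorphism of the ambient Hilbert space (arising from a change of orthonormal basis) and such isomorphisms are automatically continuous, the compatibility is routine. No substantive obstacle is expected, since the preceding lemma has already performed the essential work of producing a homeomorphism onto an open subset of a Hilbert space.
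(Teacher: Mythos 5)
The paper states this corollary without proof and without defining ``product topological manifold,'' so there is no argument of the author's to compare yours against; your proposal can only be judged on its own terms. Under your reading --- that $S^{\kappa}$ admits charts modeled on a Hilbert space that splits as a product --- the argument is correct: the radial map $\bm{y}\mapsto\bm{y}/(1-\norm{\bm{y}})$ is indeed a homeomorphism of $D^{\kappa}$ onto $\mathscr{S}\mathcal{H}$ with the inverse you give, and the orthogonal splitting $\mathscr{S}\mathcal{H}=\mathbb{R}\bm{e}_{1}\oplus(\mathsf{span}(\{\bm{e}_{0},\bm{e}_{1}\}))^{\perp}$ carries the product topology because $\norm{t\bm{e}_{1}+\bm{v}}^{2}=t^{2}+\norm{\bm{v}}^{2}$. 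Your closing paragraph on overlap compatibility is unnecessary for a purely topological manifold (no compatibility beyond continuity of transition maps is required) but harmless. Two small caveats: the preceding lemma is stated only for $\kappa\geq\aleph_{0}$, so you are implicitly extending its chart construction to finite $\kappa\geq 1$ (this does work, but should be said); and for $\kappa=1$ your second factor $\mathcal{H}''$ is the zero space, so the ``product'' is degenerate.

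The substantive worry is that your interpretation makes the corollary nearly contentless --- every Hilbert space of dimension at least $1$ splits off an $\mathbb{R}$ factor, so ``modeled on a product space'' adds essentially nothing to the preceding lemma. If the intended meaning is the stronger, global one --- that $S^{\kappa}$ is homeomorphic to a product $M\times N$ of manifolds of positive dimension --- then your proof does not establish it, and for finite $\kappa$ no proof can: $S^{2}$ is not a product of two $1$-manifolds (its Euler characteristic is $2$, while any such product has Euler characteristic $0$), so the corollary as stated with $\kappa\geq 1$ would then be false. For $\kappa\geq\aleph_{0}$ the global statement is true but requires genuinely different machinery (the unit sphere of an infinite-dimensional Hilbert space is homeomorphic to the space itself, in the spirit of the paper's contractibility lemma via the shift operator), none of which appears in your argument. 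You should state explicitly which notion of ``product manifold'' you are proving, and if it is the local one, acknowledge that it is only a trivial strengthening of the manifold lemma.
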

For a stronger version with differentiability, we need the following lemma to give out the completeness as preparation:
\begin{lemma}
$S^{\kappa}$ is complete with the metric induced by the norm $\norm{\bullet}_{\mathcal{H}}$.
\end{lemma}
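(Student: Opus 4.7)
The plan is to exploit the fact that $S^{\kappa}$ sits as a closed subset inside the complete metric space $\mathcal{H}$, from which completeness of $S^{\kappa}$ with the induced metric follows by a standard argument. Concretely, I would take an arbitrary Cauchy sequence $\{\bm{x}_{n}\}_{n=1}^{\infty}\subseteq S^{\kappa}$ in the metric $d(\bm{x},\bm{y})=\norm{\bm{x}-\bm{y}}_{\mathcal{H}}$, and observe that the same sequence is Cauchy in the ambient Hilbert space $\mathcal{H}$. Because $\mathcal{H}$ is complete by definition, there exists a limit $\bm{x}_{\infty}\in\mathcal{H}$ with $\bm{x}_{n}\to\bm{x}_{\infty}$ in norm.

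Next I would verify that $\bm{x}_{\infty}\in S^{\kappa}$, i.e.\ $\norm{\bm{x}_{\infty}}_{\mathcal{H}}=1$. The cleanest way is to use continuity of the norm functional $\norm{\bullet}_{\mathcal{H}}:\mathcal{H}\to\mathbb{R}$, which follows from the reverse triangle inequality $|\norm{\bm{x}}-\norm{\bm{y}}|\leq\norm{\bm{x}-\bm{y}}$. Applied to the sequence, this gives $\norm{\bm{x}_{\infty}}_{\mathcal{H}}=\lim_{n\to\infty}\norm{\bm{x}_{n}}_{\mathcal{H}}=1$, so $\bm{x}_{\infty}\in S^{\kappa}$. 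Equivalently, one may note that $S^{\kappa}$ is the preimage $\norm{\bullet}_{\mathcal{H}}^{-1}(\{1\})$ of a closed set under a continuous map, hence closed in $\mathcal{H}$, and every closed subset of a complete metric space is complete.

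To finish, I would point out that convergence in the induced metric on $S^{\kappa}$ and convergence in $\mathcal{H}$ coincide for sequences that stay in $S^{\kappa}$, so $\bm{x}_{n}\to\bm{x}_{\infty}$ already holds inside $S^{\kappa}$ with the induced metric. Thus every Cauchy sequence in $S^{\kappa}$ converges in $S^{\kappa}$.

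There is no genuine obstacle here; the result is formal once one invokes completeness of $\mathcal{H}$ and continuity of the norm. The only thing worth being careful about is that the definition of $S^{\kappa}$ uses the Hilbert space whose dimension is $|\kappa+1|$ (cf.\ the definition of $S^{\kappa}$ above), but this has no effect on the argument since any real Hilbert space is complete regardless of its dimension.
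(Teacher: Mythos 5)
Your proof is correct and takes essentially the same route as the paper: $S^{\kappa}$ is closed in the complete space $\mathcal{H}$, hence complete. The only cosmetic difference is that you justify closedness via the preimage $\norm{\bullet}_{\mathcal{H}}^{-1}(\{1\})$ under the continuous norm (and spell out the Cauchy-sequence argument), whereas the paper identifies $S^{\kappa}$ as the boundary of the open disc $D^{\kappa}$; your version is the cleaner of the two.
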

\begin{proof}
Treat $S^{\kappa}$ as a subset of $\mathcal{H}$ thus $S^{\kappa}$ is the boundary of the open disc $D^{\kappa}=\{\bm{x} \in \mathcal{H} : \norm{\bm{x}}<1\}$, therefore $S^{\kappa}$ is a closed subset of $\mathcal{H}$. Since $\mathcal{H}$ is a Hilbert space, $\mathcal{H}$ is a complete metric space. In brief, $S^{\kappa}$ is complete with the metric inherited from $\mathcal{H}$, i.e. the metric induced from the norm $\norm{\bullet}_{\mathcal{H}}$.
\end{proof}
As a corollary, this lemma follows:
\begin{corollary}
The $\kappa$-sphere $S^{\kappa} (\kappa\geq\aleph_{0})$ is a Fr\'{e}chet-differentiable manifold based on Hilbert space.
\end{corollary}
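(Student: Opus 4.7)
My plan is to refine the topological atlas from the preceding lemma into a Fr\'{e}chet-differentiable one. For each $\bm{u}\in S^{\kappa}$, let $(S^{\kappa})^{+}_{\bm{u}}=\{\bm{x}\in S^{\kappa} : \innerproduct{\bm{x}}{\bm{u}}>0\}$ and consider the chart $P_{\bm{u}}:\bm{x}\mapsto \bm{x}-\innerproduct{\bm{x}}{\bm{u}}\bm{u}$ with inverse $P_{\bm{u}}^{-1}:\bm{y}\mapsto \bm{y}+\sqrt{1-\norm{\bm{y}}^{2}}\,\bm{u}$, taking values in the open unit disc of the closed hyperplane $(\mathsf{span}\{\bm{u}\})^{\perp}$. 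As $\bm{u}$ ranges over $S^{\kappa}$, the hemispheres $(S^{\kappa})^{+}_{\bm{u}}$ cover the sphere, and the preceding lemma already supplies the topological-manifold structure on each of them.

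Next I would verify that these are in fact Fr\'{e}chet charts and that the transition maps are Fr\'{e}chet-differentiable. The chart $P_{\bm{u}}$ is an orthogonal projection, hence a bounded linear operator, so it is of class $C^{\infty}$ with constant derivative $dP_{\bm{u}}=P_{\bm{u}}$. For the inverse, the identity summand is trivially Fr\'{e}chet-smooth, and the scalar coefficient $\bm{y}\mapsto \sqrt{1-\norm{\bm{y}}^{2}}$ factors through $\bm{y}\mapsto \norm{\bm{y}}^{2}=\innerproduct{\bm{y}}{\bm{y}}$, which is a bounded symmetric bilinear form on a Hilbert space (hence Fr\'{e}chet-smooth with derivative $\bm{h}\mapsto 2\innerproduct{\bm{y}}{\bm{h}}$), composed with the real-analytic function $t\mapsto \sqrt{1-t}$ on $(-\infty,1)$. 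The chain and product rules then deliver Fr\'{e}chet-smoothness of $P_{\bm{u}}^{-1}$ on $D^{\kappa}_{\bm{u}}$, and each transition map $P_{\bm{u}_{2}}\circ P_{\bm{u}_{1}}^{-1}$ is Fr\'{e}chet-differentiable on its domain of definition by composition, after canonically identifying the codomain hyperplanes as closed subspaces of the same ambient $\mathcal{H}$.

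Finally I would invoke the completeness lemma to confirm that the local model is a genuine Hilbert space: each chart's image sits inside $(\mathsf{span}\{\bm{u}\})^{\perp}$, a closed linear subspace of the complete space $\mathcal{H}$ and therefore itself a real Hilbert space. The main obstacle is the Fr\'{e}chet (rather than merely G\^{a}teaux) differentiability of the square-root term in $P_{\bm{u}}^{-1}$; once one notes that $\norm{\cdot}^{2}$ is a bounded quadratic form on a Hilbert space, hence Fr\'{e}chet-smooth everywhere with a locally uniform remainder estimate, the composition with the analytic scalar function $\sqrt{1-t}$ closes the argument and the rest reduces to the bookkeeping inherited from the topological-manifold lemma.
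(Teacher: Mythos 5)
Your proposal is correct and follows essentially the same route as the paper: both arguments rest on the linearity of the projection chart $P_{\bm{u}}$ and on obtaining the Fr\'{e}chet derivative of $\bm{y}\mapsto\sqrt{1-\norm{\bm{y}}^{2}}$ by differentiating the quadratic form $\norm{\cdot}^{2}$ (derivative $\bm{h}\mapsto 2\innerproduct{\bm{y}}{\bm{h}}$) and composing with $t\mapsto\sqrt{1-t}$ via the chain rule. Your version is marginally more careful in parametrizing charts by arbitrary $\bm{u}\in S^{\kappa}$ and in noting that the model hyperplanes are closed, hence Hilbert, subspaces, but the decomposition and the key computation are the same.
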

\begin{proof}
Take any two intersected atlases of $S^{\kappa}$, namely $(U,P_{U})$ and $(V,P_{V})$ with $U \cap V \neq \emptyset$. To research the transition map $P_{U} \circ P_{V}^{-1}$, we'd like to give out $P_{U}$ and $P_{V}^{-1}$ (but restricted on $U \cap V$) and their Fr\'{e}chet derivative.
\begin{align*}
P_{U} : {U}\cap{V} &\rightarrow D^{\kappa} \\
\bm{x} &\mapsto \bm{x}-\innerproduct{\bm{x}}{\bm{e}_{1}}\bm{e}_{1}
\end{align*}
\begin{align*}
P_{V}^{-1} : & D^{\kappa} \rightarrow {U}\cap{V} \\
\bm{y} &\mapsto \bm{y}+\sqrt{1-\norm{\bm{y}}^{2}}\bm{e}_{2}
\end{align*}
Now calculate their Fr\'{e}chet derivative (which implying that they're Fr\'{e}chet-derivable).\\
Since $P_{U}$ is linear, its Fr\'{e}chet derivative at any point in its domain is $P_{U}$ but whose domain is $\mathcal{H}$.\\
Now for $P_{V}^{-1}$. Notice the identity $\norm{\bm{x}}=\sqrt{\innerproduct{\bm{x}}{\bm{x}}}\forall\bm{x}\in\mathcal{H}$.
\begin{align*}
\lim_{\bm{h}\rightarrow\bm{0}}\frac{\norm{\innerproduct{\bm{x}+\bm{h}}{\bm{x}+\bm{h}}-\innerproduct{\bm{x}}{\bm{x}}-2\innerproduct{\bm{x}}{\bm{h}}}_{\mathbb{R}}}{\norm{\bm{h}}_{\mathcal{H}}}=\lim_{\bm{h}\rightarrow\bm{0}}\frac{\norm{\innerproduct{\bm{h}}{\bm{h}}}_{\mathbb{R}}}{\norm{\bm{h}}_{\mathcal{H}}}=0
\end{align*}
Therefore by definition we have 
\begin{align*}
D\norm{\bullet}^{2}(\bm{x}):\bm{h}\mapsto2\innerproduct{\bm{x}}{\bm{h}}
\end{align*}
Again with the chain rule of Fr\'{e}chet derivatives, it follows :
\begin{align*}
DP_{V}^{-1}(\bm{y}):\bm{h}\,\mapsto&\,\bm{h}+\frac{d\sqrt{1-\norm{\bm{y}}^{2}}}{d\norm{\bm{y}}^{2}} \cdot (D\norm{\bullet}^{2}(\bm{y}))(\bm{h}) \cdot \bm{e}_{2}\\
&\;=\bm{h}+(-\frac{1}{2\sqrt{\bm{1-\norm{\bm{y}}^{2}}}}) \cdot 2\innerproduct{\bm{y}}{\bm{h}} \cdot \bm{e}_{2}\\
&\;=\bm{h}-\frac{\innerproduct{\bm{y}}{\bm{h}}}{\sqrt{\bm{1-\norm{\bm{y}}^{2}}}}\bm{e}_{2}
\end{align*}
where the denominator in the expression won't vanish when restricted on ${U}\cap{V}$. \\
Still by chain rule of Fr\'{e}chet derivative, now we can give out the Fr\'{e}chet derivative of $P_{U} \circ P_{V}^{-1}$ as $D(P_{U} \circ P_{V}^{-1})(\bm{x})=DP_{U}(P_{V}^{-1}(\bm{x})) \circ DP_{V}^{-1}(\bm{x})$.
\begin{align*}
D(P_{U} \circ P_{V}^{-1})(\bm{y}) : \bm{h} \mapsto P_{U}(\bm{h}-\frac{\innerproduct{\bm{y}}{\bm{h}}}{\sqrt{\bm{1-\norm{\bm{y}}^{2}}}}\bm{e}_{2})=P_{U}(\bm{h})-\frac{\innerproduct{\bm{y}}{\bm{h}}}{\sqrt{\bm{1-\norm{\bm{y}}^{2}}}}P_{U}(\bm{e}_{2})
\end{align*}
As a conclusion we desire, $P_{U} \circ P_{V}^{-1}$ is Fr\'{e}chet-derivable. Due to the arbitrariness of the choice of the atlases, transition maps of $S^{\kappa}$ are all Fr\'{e}chet-derivable, i.e. $S^{\kappa}$ is a differential Hilbert manifold.We hope the discussions above would help to add a manifold structure to $O(\kappa)$.
\end{proof}
Now get back to $O(\kappa)$ turn our sight to the group actions.
Denote the subgroup which keeps $\mathsf{span}(\{\bm{e}_{i}\}_{i=0}^{n-1})$ fixed as $O^{(n)}(\kappa)$ (thus $O^{(0)}(\kappa)=O(\kappa)$) and the subgroup which keeps $(\mathsf{span}(\{\bm{e}_{i}\}_{i=0}^{n-1}))^{\perp}$ fixed as $O_{(n)}(\kappa)$.
\begin{theorem}
Given a cardinal number $\kappa \geq \omega$ and a real Hilbert space $\mathcal{H}$ s.t. $\dim\mathcal{H}=\kappa$ with an orthonormal basis $\{\bm{e}_{i}\}_{i\in\kappa'}$. Let $O(\kappa)$ acts on $S^{\kappa}$ by left multiplication. Let $O^{(1)}(\kappa)$ acts on $O(\kappa)$ on the right side. We have :
\[
O(\kappa)/O^{(1)}(\kappa)\approx{S^{\kappa}}
\]
\end{theorem}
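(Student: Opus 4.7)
The plan is to realise the desired homeomorphism via the orbit map $\pi : O(\kappa) \to S^{\kappa}$, $\pi(A)=A\bm{e}_{0}$. First I would check the orbit-stabiliser picture: $\pi$ is continuous since $\norm{A\bm{e}_{0} - B\bm{e}_{0}}_{\mathcal{H}} \leq \norm{A-B}_{\mathrm{op}}$; it is surjective because any $\bm{y}\in S^{\kappa}$ may be extended to an orthonormal basis of $\mathcal{H}$, and the associated change-of-basis preserves the inner product (hence lies in $O(\kappa)$) and sends $\bm{e}_{0}$ to $\bm{y}$; and the fibres of $\pi$ are exactly the left cosets of the pointwise stabiliser $O^{(1)}(\kappa)$ of $\bm{e}_{0}$. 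Hence $\pi$ descends to a continuous bijection $\tilde{\pi} : O(\kappa)/O^{(1)}(\kappa) \to S^{\kappa}$.

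The substantive step is upgrading $\tilde{\pi}$ to a homeomorphism; compactness of $\mathrm{O}(n)$, which handles this in the finite-dimensional lemma, is unavailable here. My approach is to exhibit continuous local sections of $\pi$. Fix $\bm{x}_{0}\in S^{\kappa}$ and on the open hemisphere $\mathcal{N}_{\bm{x}_{0}} = \{\bm{y}\in S^{\kappa} : \innerproduct{\bm{y}}{\bm{x}_{0}} > -1\}$ define $s(\bm{y})$ to be the orthogonal operator that rotates $\bm{x}_{0}$ to $\bm{y}$ inside $\mathsf{span}(\{\bm{x}_{0},\bm{y}\})$ and fixes its orthogonal complement pointwise, with the convention $s(\bm{x}_{0})=I$. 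A direct computation expresses $s(\bm{y})-I$ as a rank-at-most-two operator whose coefficients are rational functions of $\innerproduct{\cdot}{\bm{x}_{0}}$, $\innerproduct{\cdot}{\bm{y}}$ and $1+\innerproduct{\bm{y}}{\bm{x}_{0}}$; this yields operator-norm continuity of $s$ on $\mathcal{N}_{\bm{x}_{0}}$ together with $\pi\circ s = \mathrm{id}$. Transporting $s$ by left multiplication with a fixed lift of an arbitrary other point of $S^{\kappa}$ produces a local section near that point, so $\pi$ admits continuous local sections globally. Openness of $\pi$ follows immediately, $\tilde{\pi}$ is a continuous open bijection, and hence a homeomorphism.

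The main obstacle I anticipate is the continuity of $s$ at $\bm{y}=\bm{x}_{0}$: naively both the rotation angle and the rotation axis degenerate simultaneously, and one must verify that the rank-two correction to $I$ vanishes in operator norm rather than merely in the strong operator topology. This requires a quantitative cancellation between the vanishing angle and the shrinking perpendicular component of $\bm{y}$, which is the one place where the absence of compactness of $O(\kappa)$ forces an honest estimate rather than a soft closed-map argument. Once this single continuity estimate is in hand, every remaining step is formal bookkeeping with the quotient topology.
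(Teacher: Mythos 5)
Your proposal is correct, and its skeleton coincides with the paper's: the orbit map $\pi:A\mapsto A\bm{e}_{0}$, the Lipschitz estimate $\norm{A\bm{e}_{0}-B\bm{e}_{0}}\leq\norm{A-B}_{\mathrm{op}}$ for continuity, the identification of the fibres of $\pi$ with the cosets of the stabiliser $O^{(1)}(\kappa)$, and then openness of $\pi$ to conclude that the induced bijection is a homeomorphism. Where you genuinely diverge is at the one substantive step, openness. The paper argues directly that $\lambda(-,\bm{e}_{0})$ maps the open $\varepsilon$-ball about $A$ \emph{onto} the open $\varepsilon$-ball about $A\bm{e}_{0}$, justified only by a remark about the ``extreme case'' $d(A,B)=d(A\bm{e}_{0},B\bm{e}_{0})$; the claim is true (the plane rotation $R$ carrying $A\bm{e}_{0}$ to a nearby $\bm{y}$ satisfies $\norm{I-R}_{\mathrm{op}}=\norm{A\bm{e}_{0}-\bm{y}}$, so $B=RA$ works), but the paper does not supply this witness. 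Your route — exhibiting the continuous local section $s(\bm{y})=I-\frac{1}{1+\innerproduct{\bm{y}}{\bm{x}_{0}}}(\bm{x}_{0}+\bm{y})\otimes(\bm{x}_{0}+\bm{y})+2\,\bm{y}\otimes\bm{x}_{0}$ on $\{\bm{y}:\innerproduct{\bm{y}}{\bm{x}_{0}}>-1\}$ (which, incidentally, is the sphere minus the antipode of $\bm{x}_{0}$, not a hemisphere, so a single chart already covers all but one point) — is the more rigorous version of the same idea, and the degeneracy at $\bm{y}=\bm{x}_{0}$ that you flag as the main obstacle in fact dissolves: rank-one operators $\bm{a}\otimes\bm{b}$ depend norm-continuously on $\bm{a},\bm{b}$ with $\norm{\bm{a}\otimes\bm{b}}_{\mathrm{op}}=\norm{\bm{a}}\,\norm{\bm{b}}$, and the denominator tends to $2$, so no delicate cancellation is needed. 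It is worth noting that your section is exactly the device the paper itself introduces one corollary later, where $h_{0}=-r(-\bm{e}_{0},\bullet)\circ\sigma$ (a product of two reflections, i.e.\ your plane rotation) is used to trivialise the bundle; so your argument in effect front-loads the paper's local-triviality construction to prove the quotient identification cleanly.
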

\begin{proof}
Let $O(\kappa)$ acts on $S^{\kappa}$ on the left transitively :
\begin{align*}
\lambda(-,-) : O(\kappa) \times S^{\kappa} &\to S^{\kappa}\\
(A,\bm{x}) &\mapsto A\bm{x}
\end{align*}
Thus the curried one $\lambda(-,\bm{e}_{0})$ gives out a surjection. Notice that both spaces are equipped with the topology induced by metrics (from norms). Given any operator $A \in O(\kappa)$, check its open $\varepsilon$-neighbourhood ($\varepsilon>0$) that given any $B \in O(\kappa)$ s.t. $\norm{A-B}<\varepsilon$ :
\[
\norm{(A-B)\bm{e}_{0}}\leq\norm{A-B}\norm{\bm{e}_{0}}=\norm{A-B}
\]
That is,
\[
d\left(\lambda\left(A,\bm{e}_{0}\right),\lambda\left(B,\bm{e}_{0}\right)\right) \leq d(A,B)<\varepsilon
\]
In other words, $\lambda(-,\bm{e}_{0})$ is continuous.\\
Write $A$ into the form (and similar for $B$) :
\begin{align*}
A : \mathcal{H} &\to \mathcal{H}\\
\bm{x} &\mapsto \sum_{i\in\kappa'}\innerproduct{\bm{x}}{\bm{e}_{i}}(A\bm{e}_{i})
\end{align*}
Therefore, when and only when $d(A,B)=d\left(\lambda\left(A,\bm{e}_{0}\right),\lambda\left(B,\bm{e}_{0}\right)\right)$ (i.e. in the extreme cases), $A\bm{e}_{i}=B\bm{e}_{i}\forall i\geq{1}$. To be precise, $\lambda(-,\bm{e}_{0})$ maps the open $\varepsilon$-neighbourhood of $A$ onto the open $\varepsilon$-neighbourhood of $A\bm{e}_{0}$. As a result, $\lambda(-,\bm{e}_{0})$ is a continuous surjective open mapping, i.e. $\lambda(-,\bm{e}_{0}) : O(\kappa) \to S^{\kappa}$ is a quotient mapping. The resulting quotient space $O(\kappa)/\lambda(-,\bm{e}_{0})$ is thus homeomorphic to $S^{\kappa}$.\\

Now turn to consider the orbit space $O(\kappa)/O^{(1)}(\kappa)$. The right action is given as :
\begin{align*}
\rho(-,-) : O(\kappa) \times O^{(1)}(\kappa) &\to O(\kappa)\\
(X,A) &\mapsto XA^{-1}
\end{align*}
Two operators $A,B \in O(\kappa)$ lie in the same orbit if and only if $A^{-1}B \in O^{(1)}(\kappa)$, i.e. $\text{Orb}_{\rho}(A)=AO^{(1)}(\kappa)=BO^{(1)}(\kappa)=\text{Orb}_{\rho}(B)$ if and only if $A^{-1}B \in O^{(1)}(\kappa)=x\text{Stab}_{\lambda}(\bm{e}_{0})$.\\
Therefore, followed by a simple calculation :
\[
A\bm{e}_{0}=A(A^{-1}B\bm{e}_{0})=AA^{-1}B\bm{e}_{0}=B\bm{e}_{0}
\]
which implies $O(\kappa)/\lambda(-,\bm{e}_{0})=O(\kappa)/O^{(1)}(\kappa)$. As we've proved that the former is homeomorphic to $S^{\kappa}$, the later follows that $O(\kappa)/O^{(1)}(\kappa) \approx S^{\kappa}$
\end{proof}
By a little modification up to isomorphism (of topological groups) it follows :
\begin{corollary}
Given a real Hilbert space $\mathcal{H}$ s.t. $\dim\mathcal{H}=\kappa \geq \omega$,
\[
O^{(n)}(\kappa)/O^{(n+1)}(\kappa)\approx{S^{\kappa}}
\]
\end{corollary}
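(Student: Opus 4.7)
The plan is to reduce the corollary to the preceding theorem by restricting attention to the orthogonal complement of the already-fixed basis vectors. Write $V = \mathsf{span}(\{\bm{e}_{i}\}_{i=0}^{n-1})$ and $W = V^{\perp}$. Because any $A \in O^{(n)}(\kappa)$ fixes $V$ pointwise and preserves orthogonality, it must send $W$ into itself, so there is a natural restriction map
\[
\Phi : O^{(n)}(\kappa) \to O(W), \quad A \mapsto A|_{W}.
\]
The first step is to observe that $\Phi$ is a group isomorphism: it is clearly a homomorphism, and it is bijective because each element of $O^{(n)}(\kappa)$ decomposes uniquely as the identity on $V$ together with an arbitrary element of $O(W)$ on $W$.

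Next I would verify that $\Phi$ is in fact an isometry for the operator norm, hence a topological-group isomorphism. For $A, B \in O^{(n)}(\kappa)$ the difference $A - B$ vanishes on $V$, so the supremum defining $\norm{A-B}_{\mathrm{op}}$ can be restricted to unit vectors of $W$, giving $\norm{A-B}_{\mathrm{op}} = \norm{(A-B)|_{W}}_{\mathrm{op}}$. Under $\Phi$ the subgroup $O^{(n+1)}(\kappa)$, which additionally fixes $\bm{e}_{n}$, is sent precisely onto the subgroup of $O(W)$ stabilising $\bm{e}_{n}$; since $\bm{e}_{n}$ serves as the first vector of the orthonormal basis $\{\bm{e}_{i}\}_{i \geq n}$ of $W$, this image is exactly $O^{(1)}(W)$ in the notation of the excerpt.

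Since $\kappa \geq \omega$, removing $n$ vectors from an infinite orthonormal basis leaves one of the same cardinality, so $\dim W = \kappa$, and therefore $O(W) \cong O(\kappa)$ and $O^{(1)}(W) \cong O^{(1)}(\kappa)$ as topological groups. The isometric isomorphism $\Phi$ then descends to a homeomorphism of coset spaces
\[
O^{(n)}(\kappa)/O^{(n+1)}(\kappa) \;\approx\; O(W)/O^{(1)}(W),
\]
after which the preceding theorem applied to $W$ in place of $\mathcal{H}$ yields $O(W)/O^{(1)}(W) \approx S^{\kappa}$, using that the unit sphere of any $\kappa$-dimensional Hilbert space realises $S^{\kappa}$ up to homeomorphism (since $|\kappa+1| = \kappa$ for $\kappa \geq \omega$). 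The most delicate point I anticipate is the passage from a topological-group isomorphism carrying one closed subgroup onto another to a homeomorphism of the corresponding coset spaces; this is standard functoriality of the quotient, but it should be stated explicitly, because the theorem one level up was phrased via the specific quotient map $\lambda(-,\bm{e}_{0})$ rather than in purely abstract coset-space terms.
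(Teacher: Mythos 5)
Your argument is correct and is exactly the route the paper intends: the paper offers no written proof beyond the remark that the corollary follows ``by a little modification up to isomorphism (of topological groups),'' and your restriction map $\Phi : O^{(n)}(\kappa) \to O(W)$, $W = (\mathsf{span}(\{\bm{e}_{i}\}_{i=0}^{n-1}))^{\perp}$, together with the observations that $\dim W = \kappa$ for $\kappa \geq \omega$ and that $\Phi$ is an operator-norm isometry carrying $O^{(n+1)}(\kappa)$ onto $O^{(1)}(W)$, is precisely that modification made explicit. The one point you rightly flag --- that an isomorphism of topological groups matching the subgroups descends to a homeomorphism of the coset spaces --- is standard and completes the reduction to the preceding theorem.
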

What's more,
\begin{corollary}
For all $\kappa\in\mathbf{Card}$, the following is a fibration :
\[
\xymatrix{
O^{(1)}(\kappa) \ar[rr] && O(\kappa) \ar[rr]^{\tilde{\pi}} && S^{\kappa}
}
\]
Here $\pi$ is the canonical projection induced by the quotient, $\sigma$ is the homeomorphism $O(\kappa)/O^{(1)}(\kappa) \to S^{\kappa}$ and the fibre projection is defined as $\tilde{\pi}=\sigma\circ\pi$.
\end{corollary}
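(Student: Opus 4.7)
The plan is to upgrade the continuous surjection $\tilde{\pi} = \sigma\circ\pi : O(\kappa) \to S^{\kappa}$ of the preceding theorem into a locally trivial bundle with fibre $O^{(1)}(\kappa)$, which is strictly stronger than being a Hurewicz fibration. Since $O(\kappa)$ is a topological group (Section~5.1) and $O^{(1)}(\kappa) = \Stab{\bm{e}_{0}}$ under the left action $\lambda$, each fibre $\tilde{\pi}^{-1}(\bm{x})$ is a left coset $AO^{(1)}(\kappa)$, homeomorphic to $O^{(1)}(\kappa)$ via left translation by $A$. Thus the whole argument reduces to exhibiting continuous local sections of $\tilde{\pi}$, after which local trivialisations are built by the standard principal-bundle recipe.

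For the section, I would use Householder-type reflections, which belong to $\mathrm{Refl}(\mathcal{H}) \subseteq \Theta(\kappa) \subseteq O(\kappa)$. Put $U_{+} = S^{\kappa} \setminus \{-\bm{e}_{0}\}$ and, for $\bm{x} \in U_{+}$,
\[
\bm{u}(\bm{x}) = \frac{\bm{e}_{0}+\bm{x}}{\norm{\bm{e}_{0}+\bm{x}}}, \qquad s_{+}(\bm{x}) = H_{\bm{u}(\bm{x})} \circ H_{\bm{e}_{0}},
\]
where $H_{\bm{w}} : \bm{v} \mapsto \bm{v} - 2\innerproduct{\bm{v}}{\bm{w}}\bm{w}$ is the reflection $2P-I$ for $P$ the projection onto $\{\bm{w}\}^{\perp}$. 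A routine calculation yields $s_{+}(\bm{x})\bm{e}_{0} = \bm{x}$ (with the limiting value $s_{+}(\bm{e}_{0}) = H_{\bm{e}_{0}}^{2} = I$ at the base point), so $\tilde{\pi}\circ s_{+} = \mathrm{id}_{U_{+}}$. An analogous section $s_{-}$ on $U_{-} = S^{\kappa} \setminus \{\bm{e}_{0}\}$, based at $-\bm{e}_{0}$, covers the remaining point. Given these sections, define on each chart
\begin{align*}
\Phi_{\pm} : \tilde{\pi}^{-1}(U_{\pm}) &\to U_{\pm} \times O^{(1)}(\kappa),\\
A &\mapsto \bigl(\tilde{\pi}(A),\; s_{\pm}(\tilde{\pi}(A))^{-1}\cdot A\bigr),
\end{align*}
with inverse $(\bm{x}, B) \mapsto s_{\pm}(\bm{x}) \cdot B$. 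The second coordinate indeed lies in $O^{(1)}(\kappa)$ because $s_{\pm}(\tilde{\pi}(A))\bm{e}_{0} = \tilde{\pi}(A) = A\bm{e}_{0}$, and compatibility with the projection onto $U_{\pm}$ is immediate from the definition.

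The one non-routine point, which I expect to be the main obstacle, is operator-norm continuity of $s_{+}$ (continuity of $\Phi_{\pm}$ and $\Phi_{\pm}^{-1}$ then follows from continuity of the group operations already established in Section~5.1). This reduces to continuity of the assignment $\bm{w} \mapsto H_{\bm{w}}$ from the unit sphere into $\mathscr{B}(\mathcal{H})$, which I would derive from the estimate $\norm{H_{\bm{w}_{1}} - H_{\bm{w}_{2}}}_{\mathrm{op}} \leq 4\norm{\bm{w}_{1} - \bm{w}_{2}}$ obtained by expanding the rank-one perturbation $\innerproduct{\bm{v}}{\bm{w}_{1}}\bm{w}_{1} - \innerproduct{\bm{v}}{\bm{w}_{2}}\bm{w}_{2}$ and applying the triangle and Cauchy-Schwarz inequalities. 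Composed with $\bm{x} \mapsto \bm{u}(\bm{x})$, whose denominator $\norm{\bm{e}_{0}+\bm{x}}$ is bounded below by a strictly positive constant on any compact subset of $U_{+}$, this yields continuity of $s_{+}$. Because every formula used involves only Householder reflections and inner-product estimates, nothing is sensitive to the cardinality $\kappa$, so the same proof covers all $\kappa \in \mathbf{Card}$ uniformly and establishes that $\tilde{\pi}$ is a locally trivial fibre bundle, and in particular a fibration.
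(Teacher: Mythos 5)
Your proposal is correct, and it follows the same basic strategy as the paper's proof --- produce a section of $\tilde{\pi}$ out of reflection operators and then trivialise by group translation --- but you execute it more carefully, and in doing so you repair a real defect in the paper's argument. The paper takes a \emph{single} choice function $h_{0}(X)=-r(-\bm{e}_{0},\sigma(X))$, i.e.\ one global reflection-based section, and never addresses the fact that the reflection ``generated by'' $-\bm{e}_{0}$ and $\bm{x}$ degenerates (and cannot be extended continuously) as $\bm{x}\to-\bm{e}_{0}$; it also asserts rather than proves the operator-norm continuity of $\bm{x}\mapsto r(-\bm{e}_{0},\bm{x})$. Your two-chart version with $U_{\pm}$ avoids the antipodal degeneracy, your computation $s_{+}(\bm{x})\bm{e}_{0}=\bm{x}$ is right, and the Lipschitz estimate $\norm{H_{\bm{w}_{1}}-H_{\bm{w}_{2}}}_{\mathrm{op}}\leq 4\norm{\bm{w}_{1}-\bm{w}_{2}}$ supplies exactly the missing continuity. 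Three small points to tighten: (i) the lower bound on $\norm{\bm{e}_{0}+\bm{x}}$ does not need compact subsets (which are scarce in $S^{\kappa}$ for $\kappa\geq\omega$ anyway) --- it is a continuous nonvanishing function on $U_{+}$, so local boundedness away from $0$ is automatic; (ii) Section~5.1 of the paper only checks continuity of multiplication, so you should add the one-line observation that inversion is an isometry on $O(\kappa)$ (since $\norm{A^{-1}-B^{-1}}_{\mathrm{op}}=\norm{A^{-1}(B-A)B^{-1}}_{\mathrm{op}}\leq\norm{B-A}_{\mathrm{op}}$ for orthogonal $A,B$) before invoking continuity of $\Phi_{\pm}^{-1}$; (iii) to conclude ``fibration'' from ``locally trivial bundle'' you should remark that $S^{\kappa}$ is metrizable, hence paracompact, so Hurewicz's uniformization theorem applies. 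None of these affects the substance: your argument is complete where the paper's is not.
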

\begin{proof}
Our goal is : for every $X \in O(\kappa)/O^{(1)}(\kappa)$, take a appropriate open neighbourhood $\mathcal{N}$ of $X$, find a homeomorphism $\varphi$ to make the diagram commute :
\[
\xymatrix{
O(\kappa)\arsupset&\pi^{-1}(\mathcal{N}) \ar@{-->}[rr]^{\varphi}\ar[dr]^{\pi}&&\mathcal{N} \times O^{(1)}(\kappa) \ar[dl]_{\text{proj}_{1}}\\
&&\mathcal{N} \arsubset & S^{\kappa} \arapprox&O(\kappa)/O^{(1)}(\kappa)
}
\]
Here we start from taking a distinguished element for $X$, that is, such an injection :
\begin{align*}
h_{0}(-) : O(\kappa)/O^{(1)}(\kappa) &\to O(\kappa)\\
X &\mapsto h_{0}(X) \;\mathrm{s.t.}\; \pi(h_{0}(X))=X
\end{align*}
By the fact that the orbits cuts $O(\kappa)$ and with the axiom of choice, such an injection exists. Recall that for every $g_{1},g_{2} \in O(\kappa)$, $\pi(g_{1})=\pi(g_{2})$ if and only if $g_{1}^{-1}g_{2} \in O^{(1)}(\kappa)$, hence for every $g \in O(\kappa)$, $g \in X$ if and only if $g^{-1}\,h_{0}(X) \in O^{(1)}(\kappa)$. By taking $\phi : O(\kappa) \to O^{(1)}(\kappa);g \mapsto g^{-1}\,h_{0}(X)$, the following diagram is commutative :
\[
\xymatrix{
g\in\pi^{-1}(X) \ar@{|->}[rr]^{(\pi,\phi)\circ\Delta}\ar@{|->}[dr]_{\pi}&&(X,g^{-1} \, h_{0}(X)) \ar@{|->}[dl]^{\text{proj}_{1}}\\
&X
}
\]
where
\begin{align*}
((\pi,\phi)\circ\Delta)^{-1}: O(\kappa)/O^{(1)}(\kappa) \times O^{(1)}(\kappa) &\to \mathcal{N}\\
(X,h) &\mapsto h_{0}(X)\,h^{-1}
\end{align*}
Let $\varphi=(\pi,\phi)\circ\Delta$ which is bijective. Now to show $\varphi$ is homeomorphism, we just need to explicitly construct a continuous $h_{0}(X)$. Consider the homeomorphism $\sigma^{-1}:S^{\kappa} \to O(\kappa)/O^{(1)}(\kappa)$. Then for every $\bm{x} \in S^{\kappa}$ we have $\sigma^{-1}(\bm{x}) \in O(\kappa)/O^{(1)}(\kappa)$. By taking $\bm{e}_{0} \in S^{\kappa}$, denote the reflection generated by $\bm{x},\bm{y} \in S(\kappa)$ as $r(\bm{x},\bm{y})$, then $-r(-\bm{e}_{0},\bullet):S^{\kappa} \to O(\kappa);\bm{x} \mapsto -r(-\bm{e}_{0},\bm{x})$ together with the homeomorphism $\sigma$ finishes the construction : $h_{0}(X)=-r(-\bm{e}_{0},\sigma(X))$, i.e. $h_{0}(\bullet)=-r(-\bm{e}_{0},\bullet)\circ\sigma$.
\end{proof}

\newpage
\section{Conjectures}
\indent\indent Though in this paper we give out $\Theta(\kappa)$ and $O(\kappa)$, namely `two' generalizations of the orthogonal groups, we are not sure whether $\Theta(\kappa)$ and $O(\kappa)$ are equal or not (for which reason we use the symbol `$\trianglelefteq$' rather than `$\triangleleft$' to express their relationship), requiring an explicit `exceptional' example. Therefore, the quotient group $O(\kappa) / \Theta(\kappa)$ seem to be an interesting topic in terms of not only algebra but also topology and even analysis. In any case, the topology of $O(\kappa)$ and $\Theta(\kappa)$ and their relationship to $\mathrm{O}$ needs further discussion. Since many of our tries failed, a larger number of more abstract viewpoints, more further knowledge and more powerful utilities are required.

\nocite{*}
\bibliographystyle{siam}
\bibliography{LB5}
\end{document}